\title[Stabilization of Euler--Poincar\'e Systems with Broken Symmetry~II]{Controlled Lagrangians and Stabilization of Euler--Poincar\'e Mechanical Systems\\with Broken Symmetry II: Potential Shaping} 
\author{C\'esar Contreras}
\author{Tomoki Ohsawa}
\address{Department of Mathematical Sciences, The University of Texas at Dallas, 800 W Campbell Rd, Richardson, TX 75080-3021}
\email{cxc145430@utdallas.edu,tomoki@utdallas.edu}
\date{\today}
\keywords{Stabilization; controlled Lagrangians; potential shaping; Euler--Poincar\'e mechanical systems; broken symmetry; semidirect product}
\subjclass[]{34H15, 37J15, 53D20, 70E17, 70H33, 70Q05, 93D05, 93D15}
\newcommand\centerofmass{%
    \tikz[radius=0.4em] {%
        \fill (0,0) -- ++(0.4em,0) arc [start angle=0,end angle=90] -- ++(0,-0.8em) arc [start angle=270, end angle=180];%
        \draw (0,0) circle;%
    }%
}
\theoremstyle{plain}
\newtheorem{theorem}{Theorem}
\newtheorem*{theorem*}{Theorem}
\theoremstyle{definition}
\newtheorem{example}[theorem]{Example}
\theoremstyle{remark}
\newtheorem{remark}[theorem]{Remark}
\def\od#1#2{\frac{d#1}{d#2}}
\def\pd#1#2{\frac{\partial #1}{\partial #2}}
\def\fd#1#2{\frac{\delta #1}{\delta #2}}
\def\tpd#1#2{\partial #1/\partial #2}
\def\tfd#1#2{\delta #1/\delta #2}
\def\dzero#1#2{\left.\od{}{#1} #2 \right|_{#1=0}}
\def\parentheses#1{{\left(#1\right)}}
\def\brackets#1{{\left[#1\right]}}
\def\Span{\mathop{\mathrm{span}}\nolimits} 
\def\norm#1{{\left\|#1\right\|}}
\def\R{\mathbb{R}}
\def\defeq{\mathrel{\mathop:}=}
\def\setdef#1#2{{\left\{ #1 \ |\ #2 \right\}}}
\def\ip#1#2{{\left\langle#1,#2\right\rangle}}
\def\eps{\varepsilon}
\def\GL{\mathsf{GL}}
\def\SO{\mathsf{SO}}
\def\SE{\mathsf{SE}}
\def\se{\mathfrak{se}}
\def\so{\mathfrak{so}}
\newenvironment{tbmatrix}{\left[\begin{smallmatrix}}{\end{smallmatrix}\right]}
\def\PB#1#2{\left\{#1,#2\right\}}
\newcommand\ad{\operatorname{ad}}
\DeclareFontFamily{OMX}{MnSymbolE}{}
\DeclareSymbolFont{MnLargeSymbols}{OMX}{MnSymbolE}{m}{n}
\DeclareFontShape{OMX}{MnSymbolE}{m}{n}{
    <-6>  MnSymbolE5
   <6-7>  MnSymbolE6
   <7-8>  MnSymbolE7
   <8-9>  MnSymbolE8
   <9-10> MnSymbolE9
  <10-12> MnSymbolE10
  <12->   MnSymbolE12
}{}
\DeclareFontShape{OMX}{MnSymbolE}{b}{n}{
    <-6>  MnSymbolE-Bold5
   <6-7>  MnSymbolE-Bold6
   <7-8>  MnSymbolE-Bold7
   <8-9>  MnSymbolE-Bold8
   <9-10> MnSymbolE-Bold9
  <10-12> MnSymbolE-Bold10
  <12->   MnSymbolE-Bold12
}{}
\let\llangle\@undefined
\let\rrangle\@undefined
\DeclareMathDelimiter{\llangle}{\mathopen}%
                     {MnLargeSymbols}{'164}{MnLargeSymbols}{'164}
\DeclareMathDelimiter{\rrangle}{\mathclose}%
                     {MnLargeSymbols}{'171}{MnLargeSymbols}{'171}
\def\metric#1#2{\llangle #1, #2\rrangle}
\def\bOmega{\boldsymbol{\Omega}}
\def\bPi{\boldsymbol{\Pi}}
\def\bP{\mathbf{P}}
\def\bGamma{\boldsymbol{\Gamma}}
\def\bTheta{\boldsymbol{\Theta}}
\def\bDelta{\boldsymbol{\Delta}}
\def\vd{\mathbf{v}_{\rm d}}
\begin{document}

\footskip=.5in

\begin{abstract}
  We apply the method of controlled Lagrangians by potential shaping to Euler--Poincar\'e mechanical systems with broken symmetry.
  We assume that the configuration space is a general semidirect product Lie group $\mathsf{G} \ltimes V$ with a particular interest in those systems whose configuration space is the special Euclidean group $\mathsf{SE}(3) = \mathsf{SO}(3) \ltimes \mathbb{R}^{3}$.
  The key idea behind the work is the use of representations of $\mathsf{G} \ltimes V$ and their associated advected parameters.
  Specifically, we derive matching conditions for the modified potential exploiting the representations and advected parameters.
  Our motivating examples are a heavy top spinning on a movable base and an underwater vehicle with non-coincident centers of gravity and buoyancy.
  We consider a few different control problems for these systems, and show that our results give a general framework that reproduces our previous work on the former example and also those of Leonard on the latter.
  Also, in one of the latter cases, we demonstrate the advantage of our representation-based approach by giving a simpler and more succinct formulation of the problem.
  \keywords{Stabilization; controlled Lagrangians; potential shaping; Euler--Poincar\'e mechanical systems; broken symmetry; semidirect product}
\end{abstract}

\maketitle

\section{Introduction}
\subsection{Motivating Example}
The main goal of this paper is to stabilize equilibria of those mechanical systems whose configuration space is a semidirect product Lie group, but whose symmetry is broken by an external force.
While our main results apply to a class of mechanical systems in any finite-dimensional semidirect product Lie group $\mathsf{S} = \mathsf{G} \ltimes V$ with a Lie group $\mathsf{G}$ and a vector space $V$, our main source of motivation is those systems that are naturally defined on the special Euclidean group $\SE(3) \defeq \SO(3) \ltimes \R^{3}$ but do not possess the full $\SE(3)$-symmetry.

Although $\SE(3)$ is the natural configuration of rigid body dynamics, one rarely uses the group explicitly in its formulation, because one can usually decouple the dynamics into the translational one of the center of mass and the rotational one about it.
Furthermore, the rotational dynamics possesses the $\SO(3)$-symmetry because the gravity does not affect it.

This is not the case with the systems shown in Fig.~\ref{fig:motivating_examples}.
For the underwater vehicle (see, e.g., \citet{Le1997a,Le1997b,LeMa1997,WoLe2002} and \citet{ChHaSmWi2007,SmChWiCa2009}), the rotational and translational dynamics are coupled due to the interactions between the vehicle and the surrounding water.
The heavy top rotating on a movable base (which is assumed to be a point mass for simplicity) from our previous work~\cite{CoOh-EPwithBSym1} is essentially the same:
One needs to take into account interactions between the rotational dynamics of the top and the translational dynamics of the base.
Therefore, one needs to formulate both systems on $\SE(3)$.

\begin{figure}[hbtp]
  \centering
  \subfigure[Underwater vehicle]{
    \includegraphics[width=0.35\linewidth]{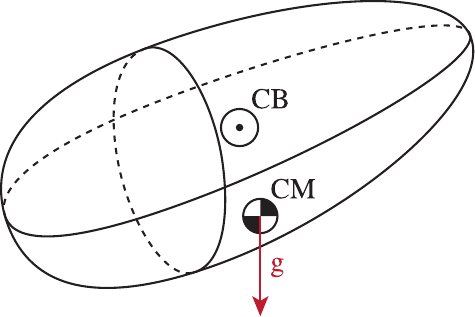}
    \label{fig:uwv}
  }
  \quad
  \subfigure[Heavy top rotating on movable base]{
    \begin{tikzpicture}[scale=.75]
      \node[draw, cylinder, shape aspect=3, rotate=90, minimum height=1cm, minimum width=3cm] (c1)  at (0,1){};
      \draw[fill] (0,1.5) circle [radius=0.05];

      \node[semitransparent] at (2,3.8) {\centerofmass};
      \draw[semithick,red!75!black,->,>=stealth]  (2,3.8) -- (2,2.8);
      \node[right] at (2,3.3) {\color{red!75!black}{$\mathrm{g}$}};
      
      \fill[
      left color=gray!50!black,
      middle color=gray!50,
      right color=gray!50!black,
      shading=axis,
      shading angle=35,
      opacity=0.25
      ] (3.75,3.75) -- (0,1.5) -- (1.3,5.2) [rotate=335] arc (180:340:1.49cm and 0.5cm); 
      \fill[
      shading=ball,
      ball color=gray!50,
      opacity=0.25
      ] (1.3,5.2) [rotate=335] arc (180:340:1.49cm and 0.5cm) arc (340:560:1.507cm); 
    \end{tikzpicture}
    \label{fig:htmb}
  }
  \caption{
    (a)~Underwater vehicle:
    The configuration space is the semidirect product $\SE(3) \defeq \SO(3) \ltimes \R^{3}$, i.e., rotations around the center of buoyancy (CB) and its translational positions.
    The center of mass (CM) is not coincident with the CB; this breaks the $\SE(3)$-symmetry that the system would otherwise possess.
    (b)~Heavy top on a (point-mass) movable base:
    Just like the underwater vehicle, the configuration space is $\SE(3)$, rotations around the junction point and the translational positions of the base; the gravity breaks the $\SE(3)$-symmetry.
  }
  \label{fig:motivating_examples}
\end{figure}

Moreover, the gravity breaks the symmetry of both systems.
The underwater vehicle is subject to both buoyancy and gravity, which usually act on different centers of the body.
One is therefore compelled to select either of them---say the center of buoyancy here---as the center of rotation; then the gravity breaks the $\SE(3)$-symmetry.
For the heavy top on a movable base, the natural center of the translational and rotational motions would be the junction point of the top and the base, but the center of mass of the top is not at the junction point, thereby breaking the $\SE(3)$-symmetry as well.

The broken symmetry implies that the standard Euler--Poincar\'e or Lie--Poisson theory does not directly apply to these systems.
To remedy the broken $\mathsf{S}$-symmetry, one needs to introduce advected parameters via a representation of $\mathsf{S}$ on the dual $X^{*}$ of an appropriate vector space $X$.
From the Lagrangian point of view, this results in the Euler--Poincar\'e equation with advected parameters on $\mathfrak{s} \times X^{*}$; see~\citet{HoMaRa1998a} and \citet{CeHoMaRa1998}.

The advantages of the Euler--Poincar\'e equation with advected parameters are: (i)~the resulting equations are defined on a vector space $\mathfrak{s} \times X^{*}$ as opposed to the tangent bundle $T\mathsf{S}$ of the Lie group $\mathsf{S}$; (ii)~the reduced Lagrangian defined on $\mathfrak{s} \times X^{*}$ tends to have a simpler expression than the original one defined on $T\mathsf{S}$.
As a result, the Euler--Poincar\'e equation with advected parameters are amenable to the method of controlled Lagrangians~\cite{Ha1999,Ha2000,BlChLeMa2001,BlLeMa2000,BlLeMa2001,ChBlLeMaWo2002,ChMa2004,BlOrVa2002,OrPeNiSi1998,OrScMaMa2001}, because a simpler expression of the Lagrangian on a vector space facilitates the derivation of the matching condition.

\subsection{Main Results and Outline}
We apply the method of controlled Lagrangians---using potential shaping particularly---to the Euler--Poincar\'e equation with advected parameters.
This work is a companion paper to our paper~\cite{CoOh-EPwithBSym1} that focused on kinetic shaping of such systems.
Our main results are matching conditions as well as the resulting control laws for such systems using potential shaping for a class of mechanical systems on a semidirect product Lie group $\mathsf{S} = \mathsf{G} \ltimes V$ with broken symmetry.
The key idea is the use of representations of the Lie group $\mathsf{S}$ and their associated advected parameters and momentum maps.
We demonstrate the generality and applicability of the theory by deriving those controls used in some existing works.

We note that the matching condition we seek here is less general than what is usually referred to as matching conditions (see, e.g., \citet{BlOrVa2002}) in which one obtains a PDE for the controlled Lagrangian.
Our matching conditions are simplified due to a specific form of potential shaping ansatz, and also do \textit{not} systematically characterize the stability of the system.
Instead, our matching conditions provide the first step towards stability: The matching must be followed by an analysis of stability conditions for each specific system in order to find an explicit stabilizing control law.
It would be an interesting future work to generalize our approach to encompass stabilization without assuming a specific ansatz for the controlled Lagrangian.

The idea of potential shaping has been around for quite a while and has been studied quite extensively in various settings; see, e.g., \citet{Sc1986}, \citet{NiSc1990}, \cite{Ha1999}, \citet{OrSpGoBl2002,BlOrVa2002,OrPeNiSi1998,OrScMaMa2001}, \citet{BlLeMa1999b,BlChLeMa2001}, \citet[Section~10.4]{BuLe2004}, \citet{SpBu2005}, and \citet{WoTe2009}.
However, none of those works address matching conditions for the Euler--Poincar\'e equation with advected parameters in general, nor stresses the role of Lie group representations.

The paper proceeds as follows:
We first give a brief survey of semidirect product Lie groups in Section~\ref{sec:semidirect_product} in order to make the paper self-contained as well as to set the notation straight, because notations involving various representations used in the semidirect product theory can be quite confusing.

In Section~\ref{sec:mechanical_systems}, we build on Section~\ref{sec:semidirect_product} to formulate the basic equations of mechanical systems on semidirect product Lie groups with broken symmetry---the Euler--Poincar\'e equation with advected parameters.
We then work out the examples shown in Fig.~\ref{fig:motivating_examples} to illustrate the ideas.
We also show how to track additional advected parameters.
This idea is important in stabilizing an equilibrium that is characterized by additional variables than the original variables of the system.

In Section~\ref{sec:potential_shaping}, we consider controlled Euler--Poincar\'e equation with advected parameters with potential shaping, and derive matching conditions as well as the resulting control laws.
Particularly, we consider the following two settings:
\begin{enumerate}[(i)]
\item The controlled system becomes a simpler system with \textit{less} advected parameters.
  This boils down to considering a subrepresentation of the original representation used to describe the original advected parameters.
  \smallskip
\item The controlled system involves \textit{additional} advected parameters---hence additional representations.
  Specifically, an operational goal of the system naturally gives rise to an equilibrium defined in terms of the original configuration variables and additional advected parameters.
\end{enumerate}
So in both cases, it boils down to using proper representations.
As a result, the matching conditions we derive are in terms of those momentum maps associated with these representations.

The first setting is rather restrictive because one can manage to reduce advected parameters in limited circumstances.
On the other hand, the second setting would have more applications because one has much more freedom in introducing additional advected parameters than reducing them, oftentimes for practical purposes.

As an example of the first setting, we find the ad-hoc potential shaping applied to the system in Fig.~\ref{fig:htmb} from our work~\cite{CoOh-EPwithBSym1}.
For the second one, we obtain those controls found by \citet{Le1997b} to stabilize a desired steady motion as well as to prevent translational drift in underwater vehicles.
Particularly, in finding the control to prevent translational drift, our use of representation of $\SE(3)$ on $\R^{4} \times \R^{4}$ results in a simpler formulation of the problem than in \citet{Le1997b}, thereby demonstrating the efficacy of our approach.

\section{Semidirect Product Lie Groups}
\label{sec:semidirect_product}
Although the concept of semidirect product Lie groups is fairly well known, derivations of concrete formulas in such Lie groups can be quite involved, and are usually not covered with details in standard references.
So we give a short survey of semidirect product Lie groups using $\SE(3) \defeq \SO(3) \ltimes \R^{3}$ as a running example to illustrate concrete calculations.
Our main references here are \citet{MaRaWe1984a,MaRaWe1984b,HoMaRa1998a,CeHoMaRa1998}.
This section overlaps with the companion paper \cite{CoOh-EPwithBSym1}, but is included for completeness as well as to set the notation.

\subsection{Semidirect Product Lie Groups and Lie Algebras}
Let $\mathsf{G}$ be a Lie group, $V$ be a vector space, and $\GL(V)$ be the set of all invertible linear transformations on $V$.
Let $\rho\colon \mathsf{G} \to \GL(V)$ be a (left) representation of $\mathsf{G}$ on $V$, i.e., $\rho(g_{1} g_{2}) = \rho(g_{1}) \rho(g_{2})$ for any $g_{1}, g_{2} \in \mathsf{G}$.
We define the semidirect product Lie group $\mathsf{S} \defeq \mathsf{G} \ltimes V$ under the multiplication
\begin{align*}
  s_{1} \cdot s_{2}
  = (g_{1}, x_{1}) \cdot (g_{2}, x_{2})
  = (g_{1} g_{2}, \rho(g_{1}) x_{2} + x_{1}).
\end{align*}
Therefore, for any element $s = (g, x) \in \mathsf{S}$, its inverse is defined by
\begin{equation*}
  s^{-1} = (g,x)^{-1} = \parentheses{ g^{-1}, -\rho(g^{-1})x }.
\end{equation*}

\begin{example}[$\SE(3) = \SO(3) \ltimes \R^{3}$]
  Consider the representation
  \begin{equation*}
    \rho\colon \SO(3) \to \GL(\R^{3}) = \GL(3,\R);
    \qquad
    \rho(R)\mathbf{x} = R \mathbf{x}
  \end{equation*}
  defined by the standard matrix-vector multiplication.
  Then we can define the special Euclidean group $\SE(3) \defeq \SO(3) \ltimes \R^{3}$ under the following group multiplication:
  \begin{equation*}
    (R_{1}, \mathbf{x}_{1}) \cdot (R_{2}, \mathbf{x}_{2})
    = (R_{1} R_{2}, R_{1} \mathbf{x}_{2} + \mathbf{x}_{1}).
  \end{equation*}
  One may think of $(R_{2}, \mathbf{x}_{2})$ as the rotational and translational configurations of a rigid body in $\R^{3}$, and then may see the above operation as the rotation by $R_{1}$ followed by the translation by $\mathbf{x}_{1}$ applied to the old configuration $(R_{2}, \mathbf{x}_{2})$.
  Another way of looking at $\SE(3)$ is that it is the matrix group
  \begin{equation*}
    \SE(3) = \setdef{ (R,\mathbf{x}) \defeq \begin{bmatrix} R & \mathbf{x} \\ \mathbf{0}^{T} & 1 \end{bmatrix} }{ R \in \SO(3),\, \mathbf{x} \in \mathbb{R}^{3} }
  \end{equation*}
  under the standard matrix multiplication.
\end{example}

\subsection{Induced Representations}
The representation $\rho$ induces several other representations as well.
First, the dual $\rho^{*}\colon \mathsf{G} \to \mathsf{GL}(V^{*})$ is defined so that, for any $g \in \mathsf{G}$, any $\alpha \in V^{*}$, and any $x \in V$,
\begin{equation*}
  \ip{ \rho^{*}(g) \alpha }{ x }
  = \ip{ \alpha }{ \rho(g^{-1}) x },
\end{equation*}
where $\ip{\,\cdot\,}{\,\cdot\,}\colon V^{*} \times V \to \R$ is the natural dual pairing.
This yields $\rho^{*}(g) = \rho(g^{-1})^{*}$, and indeed defines a left representation of $\mathsf{G}$ on $V^{*}$.

Let $\mathfrak{g}$ be the Lie algebra of $\mathsf{G}$.
Then the Lie group representation $\rho$ also gives rise to the Lie algebra representation $\rho'\colon \mathfrak{g} \to \mathfrak{gl}(V)$ as follows:
\begin{equation*}
  \rho'(\xi) v \defeq \dzero{\eps}{ \rho(\exp(\eps\xi)) v } = \xi_{V}(v),
\end{equation*}
where $\xi_{V}$ is the infinitesimal generator on $V$ corresponding to $\xi$.
In fact, as shown in \cite[Proposition~9.1.6]{MaRa1999}, $\rho'$ is a Lie algebra homomorphism, i.e., for any $\xi, \eta \in \mathfrak{g}$,
\begin{equation*}
  \rho'([\xi,\eta]) = [\rho'(\xi), \rho'(\eta)].
\end{equation*}

The Lie algebra $\mathfrak{s}$ of $\mathsf{S}$ is the semidirect product Lie algebra $\mathfrak{g} \ltimes V$ equipped with the following commutator or adjoint operator:
\begin{equation*}
  \ad_{(\xi,v)}(\eta,w)
  \defeq [(\xi,v), (\eta,w)]
  = \parentheses{
    \ad_{\xi}\eta,\,
    \rho'(\xi)w - \rho'(\eta)v
  }.
\end{equation*}

Let us next find the coadjoint representation on the dual $\mathfrak{s}^{*}$ of the Lie algebra $\mathfrak{s}$.
To that end, we first would like to find the so-called diamond operator (see \citet{HoMaRa1998a,CeHoMaRa1998} and \citet[\S7.5]{HoScSt2009}).
Let us fix $v \in V$ in $\rho'(\xi) v$ to regard $\xi \mapsto \rho'(\xi) v$ as a linear map $\rho'_{v}\colon \mathfrak{g} \to V$, i.e.,
\begin{equation*}
  \rho'_{v}(\xi) \defeq \rho'(\xi) v.
\end{equation*}
Then its dual map $(\rho'_{v})^{*}\colon V^{*} \to \mathfrak{g}^{*}$ is defined so that, for any $\alpha \in V^{*}$ and $\xi \in \mathfrak{g}$,
\begin{equation*}
  \ip{ (\rho'_{v})^{*}(\alpha) }{ \xi } = \ip{ \alpha }{ \rho'_{v}(\xi) }.
\end{equation*}
The diamond operator $\diamond\colon V \times V^{*} \to \mathfrak{g}^{*}$ is then defined as
\begin{equation}
  \label{eq:diamond}
  v \diamond \alpha \defeq (\rho'_{v})^{*}\alpha.
\end{equation}
The diamond operator is actually the momentum map associated with the cotangent lift of the action defined by the representation $\rho$.
In fact, for any $\alpha \in V^{*}$ and any $\xi \in \mathfrak{g}$,
\begin{align*}
  \ip{ (\rho'_{v})^{*}\alpha }{ \xi }
  &= \ip{ \alpha }{ \rho'_{v}(\xi) } \\
  &= \ip{ \alpha }{ \rho'(\xi)v } \\
  &= \ip{ \alpha }{ \dzero{\eps}{\rho(\exp(s\eps))v} } \\
  &= \ip{ \alpha }{ \xi_{V}(v) } \\
  &= \ip{ \mathbf{J}(v,\alpha) }{ \xi },
\end{align*}
where $\mathbf{J}\colon T^{*}V \cong V \times V^{*} \to \mathfrak{g}^{*}$ is the momentum map associated with the cotangent lift of the $\mathsf{G}$-action $\rho$ on $V$.
Therefore,
\begin{equation}
  \label{eq:diamond2}
  v \diamond \alpha \defeq (\rho'_{v})^{*}\alpha = \mathbf{J}(v,\alpha).
\end{equation}

Using the diamond operator or the momentum map $\mathbf{J}$, we may write the coadjoint representation on the dual $\mathfrak{s}^{*}$ of $\mathsf{S}$ as follows:
\begin{equation}
  \label{eq:adstar}
  \ad_{(\xi,v)}^{*}(\mu,\alpha)
  = \parentheses{
    \ad_{\xi}^{*}\mu - \mathbf{J}(v,\alpha),\,
    \rho'(\xi)^{*}\alpha
  },
\end{equation}
where  $\rho'(\xi)^{*}$ is the dual map of $\rho'(\xi)$, i.e.,
\begin{equation*}
  \ip{ \rho'(\xi)^{*} \alpha }{ v } = \ip{ \alpha }{ \rho'(\xi) v }.
\end{equation*}

\begin{example}[$\SE(3) = \SO(3) \ltimes \R^{3}$]
  Identifying $(\R^{3})^{*}$ with $\R^{3}$ via the dot product, the dual $\rho^{*}$ is defined as
  \begin{equation*}
    \parentheses{ \rho^{*}(R) \boldsymbol{\alpha} } \cdot \mathbf{x}
    = \boldsymbol{\alpha} \cdot \rho(R^{-1}) \mathbf{x}
    = \boldsymbol{\alpha} \cdot (R^{-1} \mathbf{x})
    = (R \boldsymbol{\alpha}) \cdot \mathbf{x},
  \end{equation*}
  and so $\rho^{*}(R) \boldsymbol{\alpha} = R \boldsymbol{\alpha}$.
  
  Let us introduce the hat map to identify $\so(3)$ with $\R^{3}$:
  \begin{equation*}
    \hat{(\,\cdot\,)}\colon \R^{3} \to \so(3);
    \qquad
    \mathbf{a} \mapsto
    \hat{a} \defeq
    \begin{bmatrix}
      0 & -a_{3} & a_{2} \\
      a_{3} & 0 & -a_{1} \\
      -a_{2} & a_{1} & 0
    \end{bmatrix}.
  \end{equation*}
  Then we have $\hat{a} \mathbf{b} = \mathbf{a} \times \mathbf{b}$, and $[\hat{a}, \hat{b}]$ is identified with $\mathbf{a} \times \mathbf{b}$.
  The Lie algebra representation $\rho'$ is then
  \begin{equation}
    \label{eq:rho'-se3}
    \rho'(\hat{\Omega}) \mathbf{v} = \rho_{\mathbf{v}}'(\hat{\Omega}) = \dzero{\eps}{ \exp( \eps\hat{\Omega} ) \mathbf{v} }
    = \hat{\Omega} \mathbf{v}
    = \bOmega \times \mathbf{v}.
  \end{equation}
  As a result, we can express the commutator as
  \begin{equation*}
    \ad_{(\hat{\Omega}, \mathbf{v})} (\hat{\eta}, \mathbf{w})
    = [(\hat{\Omega}, \mathbf{v}), (\hat{\eta}, \mathbf{w})]
    = \parentheses{
      [\hat{\Omega}, \hat{\eta}],\,
      \hat{\Omega} \mathbf{w} - \hat{\eta} \mathbf{v}
    }
  \end{equation*}
  or in terms of vectors in $\R^{3}$,
  \begin{equation*}
    \ad_{(\bOmega, \mathbf{v})} (\boldsymbol{\eta}, \mathbf{w})
    = [(\bOmega, \mathbf{v}), (\boldsymbol{\eta}, \mathbf{w})]
    = \parentheses{
      \bOmega \times \boldsymbol{\eta},\,
      \bOmega \times \mathbf{w} - \boldsymbol{\eta} \times \mathbf{v}
    }.
  \end{equation*}
  Let us find the diamond operator.
  We have, for any $\hat{\Omega} \in \so(3)$,
  \begin{equation*}
    \ip{ (\rho_{\mathbf{v}}')^{*}\alpha }{ \hat{\Omega} }
    =  (\rho_{\mathbf{v}}')^{*}(\boldsymbol{\alpha}) \cdot \bOmega
    = \boldsymbol{\alpha} \cdot \parentheses{ \rho_{\mathbf{v}}'(\hat{\Omega}) }
    = \boldsymbol{\alpha} \cdot \parentheses{ \bOmega \times \mathbf{v} }
    = \parentheses{ \mathbf{v} \times \boldsymbol{\alpha} } \cdot \bOmega,
  \end{equation*}
  and so
  \begin{equation*}
    \mathbf{v} \diamond \boldsymbol{\alpha}
    = (\rho_{\mathbf{v}}')^{*}(\boldsymbol{\alpha})
    = \mathbf{v} \times \boldsymbol{\alpha}.
  \end{equation*}
  We may also find the dual $\rho'(\hat{\Omega})^{*}$ as follows:
  \begin{equation*}
    \ip{ \rho'(\hat{\Omega})^{*}\boldsymbol{\alpha} }{ \mathbf{v} }
    = \ip{ \boldsymbol{\alpha} }{ \rho'(\hat{\Omega})\mathbf{v} }
    = \boldsymbol{\alpha} \cdot (\bOmega \times \mathbf{v})
    = ( \boldsymbol{\alpha} \times \bOmega ) \cdot \mathbf{v},
  \end{equation*}
  and so
  \begin{equation*}
    \rho'(\hat{\Omega})^{*}\boldsymbol{\alpha} = \boldsymbol{\alpha} \times \bOmega.
  \end{equation*}
  As a result, we may write the coadjoint action as follows:
  \begin{equation*}
    \ad_{(\bOmega, \mathbf{v})}^{*} (\boldsymbol{\mu}, \boldsymbol{\alpha})
    = \parentheses{
       \boldsymbol{\mu} \times \bOmega - \mathbf{v} \times \boldsymbol{\alpha},\,
       \boldsymbol{\alpha} \times \bOmega
    }.
  \end{equation*}
\end{example}

\section{Mechanical Systems on Semidirect Product Lie Groups with Broken Symmetry}
\label{sec:mechanical_systems}
\subsection{Broken Symmetry}
Let $\mathsf{S} \defeq \mathsf{G} \ltimes V$ be a semidirect product Lie group, and $L_{a_{0}}\colon T\mathsf{S} \to \R$ be a Lagrangian with parameters $a_{0} \in X^{*}$, where $X^{*}$ is the dual of a vector space $X$.
Specifically, we assume that the Lagrangian takes the following form:
\begin{equation*}
  L_{a_{0}}(s, \dot{s}) = \frac{1}{2} \metric{\dot{s}}{\dot{s}} - U_{a_{0}}(s),
\end{equation*}
where $\metric{\,\cdot\,}{\,\cdot\,}$ is a left-invariant metric on $T\mathsf{S}$, i.e., for any $s, s_{0} \in \mathsf{S}$ and any $\dot{s} \in T_{s}\mathsf{S}$,
\begin{equation*}
  \metric{T_{s}\mathsf{L}_{s_{0}}(\dot{s})}{T_{s}\mathsf{L}_{s_{0}}(\dot{s})} = \metric{\dot{s}}{\dot{s}},
\end{equation*}
where $\mathsf{L}$ stands for the left translation, i.e., $\mathsf{L}_{s_{0}}(s) = s_{0} s$ for any $s_{0}, s \in \mathsf{S}$, and $T\mathsf{L}$ is its tangent lift.
On the other hand, the potential is \textit{not} $\mathsf{S}$-invariant, i.e., $U_{a_{0}}(s_{0} s) \neq U_{a_{0}}(s)$ for some $s_{0}, s \in \mathsf{S}$, and thus breaks the $\mathsf{S}$-symmetry.

\subsection{Recovery of Symmetry}
\label{ssec:recovery}
Suppose that we can recover the broken $\mathsf{S}$-symmetry of the potential in the following way:
Let us first define the extended potential $U\colon \mathsf{S} \times X^{*} \to \R$ so that $U(s, a_{0}) = U_{a_{0}}(s)$ for any $s \in \mathsf{S}$, and let $\sigma\colon \mathsf{S} \to \GL(X)$ be a representation of $\mathsf{S}$ on $X$, and $\sigma^{*}\colon \mathsf{S} \to \GL(X^{*})$ be the induced representation on the dual $X^{*}$.
We assume that we can find an appropriate $\sigma$ so that we can recover the $\mathsf{S}$-symmetry of the potential, i.e., for any $s_{0}, s \in \mathsf{S}$ and any $a \in X^{*}$,
\begin{equation*}
  U\parentheses{ s_{0}s, \sigma(s_{0})^{*}a } = U(s, a).
\end{equation*}

Now let us define the extended Lagrangian $L\colon T\mathsf{S} \times X^{*} \to \R$ by setting
\begin{equation*}
  L(s, \dot{s}, a) \defeq \frac{1}{2} \metric{\dot{s}}{\dot{s}} - U(s, a),
\end{equation*}
and also define the action
\begin{equation*}
  \begin{aligned}
    \Phi\colon &\mathsf{S} \times (T\mathsf{S} \times X^{*}) \to T\mathsf{S} \times X^{*};
    \\
    &(s_{0}, (s, \dot{s}, a)) \mapsto \Phi_{s_{0}}(s, \dot{s}, a) \defeq \parentheses{ s_{0} s, T_{s}\mathsf{L}_{s_{0}}(\dot{s}), \sigma^{*}(s_{0}) a }.
  \end{aligned}
\end{equation*}
Then we see that the extended Lagrangian now possesses the $\mathsf{S}$-symmetry, i.e., $L \circ \Phi_{s_{0}} = L$ for any $s_{0} \in \mathsf{S}$.

\begin{remark}
  It is the variables in the dual space $X^{*}$ that have a practical importance here, whereas the variables in $X$ are auxiliary in nature.
  In the Lagrangian semidirect product theory~\cite{HoMaRa1998a,CeHoMaRa1998}, the significance of having the dual space $X^{*}$ (as opposed to $X$) for the parameters is not particularly clear.
  However, in the Hamiltonian theory~\cite{MaRaWe1984a,MaRaWe1984b}, one can formulate the system as the Lie--Poisson equation on $(\mathfrak{s} \ltimes X)^{*}$, and hence it is rather natural to have the dual space $X^{*}$ here; see \cite{HoMaRa1998a} for a comparison of the Lagrangian and Hamiltonian theories.
\end{remark}

We will also later need the momentum map $\mathbf{K} \colon X \times X^{*} \to \mathfrak{s}^{*}$ associated with the above action $\sigma$.
It is defined analogously to $\mathbf{J}$ from \eqref{eq:diamond} and \eqref{eq:diamond2} as follows:
\begin{equation}
  \label{eq:K}
  \mathbf{K}(x, a)
  = \parentheses{ \mathbf{K}_{\mathfrak{g}^{*}}(x, a),\, \mathbf{K}_{V^{*}}(x, a) }
  \defeq (\sigma'_{x})^{*}a,
\end{equation}
where we split the components of $\mathbf{K}$ into those in $\mathfrak{g}^{*}$ and $V^{*}$ as $\mathbf{K}_{\mathfrak{g}^{*}}$ and $\mathbf{K}_{V^{*}}$.

\subsection{Euler--Poincar\'e Equation with Advected Parameters}
\label{ssec:EPwithAdP}
Once the $\mathsf{S}$-symmetry is recovered as shown above, one may define (with an abuse of notation) the reduced potential $U\colon X^{*} \to \R$ so that $U\parentheses{ \sigma(s^{-1})a } = U\parentheses{ e, \sigma(s^{-1})a }$, i.e.,
\begin{equation*}
  U(a) \defeq U(e, a),
\end{equation*}
and hence also define the reduced Lagrangian $\ell\colon \mathfrak{s} \times X^{*} \to \R$ as
\begin{equation}
  \label{eq:ell}
  \ell(\xi, v, a) \defeq L(e, (\xi, v), a) = \frac{1}{2} \metric{(\xi,v)}{(\xi,v)} - U(a).
\end{equation}
Then one may reduce the variational principle from $T\mathsf{S} \times X^{*}$ to $\mathfrak{s} \times X^{*}$~(see \cite{HoMaRa1998a,CeHoMaRa1998} and \cite[\S7.5]{HoScSt2009}) to obtain the Euler--Poincar\'e equation with advected parameters:
\begin{equation*}
  \od{}{t}\parentheses{ \fd{\ell}{(\xi,v)} } = \ad_{(\xi,v)}^{*} \fd{\ell}{(\xi,v)} + \mathbf{K}\parentheses{ \fd{\ell}{a}, a },
  \qquad
  \dot{a} = \sigma'(\xi,v)^{*} a.
\end{equation*}
Note that, for any smooth function $f\colon E \to \R$ on a real vector space $E$, we define its functional derivative $\tfd{f}{x} \in E^{*}$ at $x \in E$ such that, for any $\delta x \in E$, under the natural dual pairing $\ip{\,\cdot\,}{\,\cdot\,}\colon E^{*} \times E \to \R$,
\begin{equation*}
  \ip{ \fd{f}{x} }{ \delta x } = \dzero{\eps}{ f(x + \eps\delta x) }.
\end{equation*}
For example, if $E = \R^{n}$ and $(\R^{n})^{*}$ is identified with $\R^{n}$ via the dot product, $\tfd{f}{\mathbf{x}}$ is the gradient $\tpd{f}{\mathbf{x}}$.

Using the formula~\eqref{eq:adstar} for the coadjoint action on $\mathfrak{s}^{*}$ as well as the expression for $\mathbf{K}$ in \eqref{eq:K}, we have
\begin{equation}
  \label{eq:EP}
  \begin{split}
    \od{}{t}\parentheses{ \fd{\ell}{\xi} } &= \ad_{\xi}^{*} \fd{\ell}{\xi} - \mathbf{J}\parentheses{v, \fd{\ell}{v}} + \mathbf{K}_{\mathfrak{g}^{*}}\parentheses{ \fd{\ell}{a}, a }, \\
    \od{}{t}\parentheses{ \fd{\ell}{v} } &= \rho'(\xi)^{*} \fd{\ell}{v} + \mathbf{K}_{V^{*}}\parentheses{ \fd{\ell}{a}, a }, \\
    \od{a}{t} &= \sigma'(\xi,v)^{*} a.
  \end{split}
\end{equation}

\begin{example}[{Underwater vehicle; see \citet{Le1997a,Le1997b,LeMa1997}}]
  \label{ex:uwv}
  Consider the underwater vehicle shown in Fig.~\ref{fig:uwv-details}.
  The configuration space is $\mathsf{S} = \SE(3)$, i.e., rotations about the center of buoyancy and its translational positions; see Fig.~\ref{fig:uwv}.
  More specifically, let $\{ \mathbf{e}_{i} \}_{i=1}^{3}$ and $\{ \mathbf{E}_{i} \}_{i=1}^{3}$ be the orthonormal spatial/inertial and body frames, respectively; the body frame is attached to the body at the center of buoyancy (CB) and is taken to be the principal axes of the body; see Fig.~\ref{fig:uwv-details}.
  Then, by defining the matrix $R$ so that $\mathbf{E}_{i} = R \mathbf{e}_{i}$ for $i = 1, 2, 3$ gives an element $R \in \SO(3)$.
  Note that $\{ \mathbf{E}_{i} \}_{i=1}^{3}$ is time-dependent whereas $\{ \mathbf{e}_{i} \}_{i=1}^{3}$ is fixed.
  Moreover, specifying the position of the center of buoyancy in the spatial frame as $\mathbf{x} \in \R^{3}$, we have an element $(R, \mathbf{x})$ in $\SE(3)$ that specifies the orientation and the position of the vehicle.
  
  \begin{figure}[htbp]
    \centering
    \includegraphics[width=0.5\linewidth]{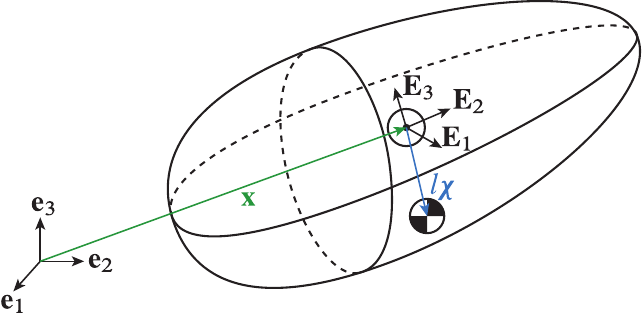}
    \caption{Underwater vehicle}
    \label{fig:uwv-details}
  \end{figure}
   
  The metric $\metric{\,\cdot\,}{\,\cdot\,}$ defining the kinetic energy is left-invariant, and is given as (see \cite{LeMa1997,Le1997a,Le1997b} for details)
  \begin{equation}
    \label{eq:metric-uwv}
    \metric{(\bOmega,\mathbf{v})}{(\bOmega,\mathbf{v})} =
    \begin{bmatrix}
      \bOmega^{T} & \mathbf{v}^{T}
    \end{bmatrix}
    \begin{bmatrix}
      J & D \\
      D^{T} & M
    \end{bmatrix}
    \begin{bmatrix}
      \bOmega \\
      \mathbf{v}
    \end{bmatrix},
  \end{equation}
  where $\bOmega$ and $\mathbf{v}$ are body angular velocity and the velocity of the center of buoyancy seen from the body frame.
  As a result, we may define the the angular and linear impulses~\cite{Le1997a,Le1997b,LeMa1997}:
  \begin{equation}
    \label{eq:impulses}
    \bPi \defeq \fd{\ell}{\bOmega} = J\bOmega + D\mathbf{v},
    \qquad
    \bP \defeq \fd{\ell}{\mathbf{v}} = D^{T}\bOmega + M\mathbf{v}.
  \end{equation}
  
  On the other hand, assuming the neutral buoyancy, the potential term is given as
  \begin{equation*}
    U_{\mathbf{e}_{3}}(R, \mathbf{x})
    = m \mathrm{g} l \mathbf{e}_{3} \cdot (R\boldsymbol{\chi})
    = m \mathrm{g} l \boldsymbol{\chi} \cdot (R^{-1}\mathbf{e}_{3}),
  \end{equation*}
  where $l \boldsymbol{\chi}$ is the position vector---$l$ being its length and $\boldsymbol{\chi}$ being the unit vector for the direction---of the center of mass measured from the center of buoyancy; see Fig.~\ref{fig:uwv-details}.
  Hence we define the extended potential $U\colon \SE(3) \times (\R^{3})^{*} \to \R$ by setting
  \begin{equation*}
    U(R, \mathbf{x}, \bGamma) \defeq m \mathrm{g} l \boldsymbol{\chi} \cdot (R^{-1}\bGamma)
  \end{equation*}
  so that $U(R, \mathbf{x}, \mathbf{e}_{3}) = U_{\mathbf{e}_{3}}(R, \mathbf{x})$.
  
  Also define the representation $\sigma\colon \SE(3) \to \GL(\R^{3})$ by
  \begin{equation*}
    \sigma(R, \mathbf{x}) \mathbf{y} \defeq R \mathbf{y}.
  \end{equation*}
  Identifying $(\R^{3})^{*}$ with $\R^{3}$ via the inner product, we have
  \begin{equation*}
    \parentheses{ \sigma^{*}(R, \mathbf{x}) \bGamma } \cdot \mathbf{y}
    = \bGamma \cdot \parentheses{ \sigma((R, \mathbf{x})^{-1}) \mathbf{y} }
    = \bGamma \cdot \parentheses{ R^{-1} \mathbf{y} }
    = (R \bGamma ) \cdot \mathbf{y}.
  \end{equation*}
  Therefore, we have
  \begin{equation*}
    \sigma^{*}(R, \mathbf{x}) \bGamma = R \bGamma.
  \end{equation*}
  As a result, we have, for any $(R_{0}, \mathbf{x}_{0}), (R, \mathbf{x}) \in \SE(3)$ and any $\bGamma \in \R^{3}$,
  \begin{align*}
    U\parentheses{ (R_{0}, \mathbf{x}_{0}) \cdot (R, \mathbf{x}), \sigma^{*}(R_{0}, \mathbf{x}_{0}) \bGamma }
    &= m \mathrm{g} l \boldsymbol{\chi} \cdot ((R_{0}R)^{-1} R_{0}\bGamma) \\
    &= m \mathrm{g} l \boldsymbol{\chi} \cdot (R^{-1} \bGamma) \\
    &= U(R, \mathbf{x}, \bGamma),
  \end{align*}
  hence recovering the $\SE(3)$-symmetry.
  Then we may define the reduced potential $U\colon (\R^{3})^{*} \to \R$ as
  \begin{equation*}
    U(\bGamma) \defeq U(I,\mathbf{0},\bGamma) = m \mathrm{g} l \boldsymbol{\chi} \cdot \bGamma,
  \end{equation*}
  and the reduced Lagrangian $\ell\colon \se(3) \times (\R^{3})^{*} \to \R$ as
  \begin{equation*}
    \ell(\bOmega, \mathbf{v}, \bGamma)
    \defeq \frac{1}{2}\metric{(\bOmega,\mathbf{v})}{(\bOmega,\mathbf{v})} - m \mathrm{g} l \boldsymbol{\chi} \cdot \bGamma.
  \end{equation*}
  
  We also find
  \begin{equation}
    \label{eq:sigma'-uwv}
    \sigma'(\bOmega,\mathbf{v}) \mathbf{y}
    = \sigma_{\mathbf{y}}'(\bOmega,\mathbf{v})
    = \hat{\Omega} \mathbf{y}
    = \bOmega \times \mathbf{y},
  \end{equation}
  and thus
  \begin{equation*}
    \parentheses{ \sigma'(\bOmega,\mathbf{v})^{*} \bGamma } \cdot \mathbf{y}
    = \bGamma \cdot \parentheses{ \sigma'(\bOmega,\mathbf{v}) \mathbf{y} }
    = \bGamma \cdot (\bOmega \times \mathbf{y})
    = (\bGamma \times \bOmega) \cdot \mathbf{y},
  \end{equation*}
  resulting in
  \begin{equation}
    \label{eq:sigma'star-uwv}
    \sigma'(\bOmega,\mathbf{v})^{*} \bGamma = \bGamma \times \bOmega.
  \end{equation}
  Similarly,
  \begin{equation*}
    \parentheses{ (\sigma'_{\mathbf{y}})^{*}\bGamma } \cdot (\bOmega,\mathbf{v})
    = \bGamma \cdot \sigma'_{\mathbf{y}}(\bOmega,\mathbf{v})
    = \bGamma \cdot (\bOmega \times \mathbf{y})
    = (\mathbf{y} \times \bGamma) \cdot \bOmega,
  \end{equation*}
  and so we obtain, using \eqref{eq:K},
  \begin{equation}
    \label{eq:K-uwv}
    \mathbf{K}(\mathbf{y},\bGamma)
    = \parentheses{ \mathbf{K}_{\so(3)^{*}}(\mathbf{y},\bGamma), \mathbf{K}_{(\R^{3})^{*}}(\mathbf{y},\bGamma) }
    = (\sigma'_{\mathbf{y}})^{*}\bGamma
    = (\mathbf{y} \times \bGamma, \mathbf{0}).
  \end{equation}
  Therefore, the Euler--Poincar\'e equation~\eqref{eq:EP} with advected parameters gives
  \begin{equation*}
    \begin{split}
    \od{}{t}\parentheses{ \pd{\ell}{\bOmega} } &= \ad_{\bOmega}^{*} \pd{\ell}{\bOmega} - \mathbf{J}\parentheses{\mathbf{v}, \pd{\ell}{\mathbf{v}}} + \mathbf{K}_{\so(3)^{*}}\parentheses{ \pd{\ell}{\bGamma}, \bGamma }, \\
    \od{}{t}\parentheses{ \pd{\ell}{\mathbf{v}} } &= \rho'(\bOmega)^{*} \pd{\ell}{\mathbf{v}} + \mathbf{K}_{(\R^{3})^{*}}\parentheses{ \pd{\ell}{\bGamma}, \bGamma }, \\
    \od{}{t}\bGamma &= \sigma'(\bOmega,\mathbf{v})^{*} \bGamma,
  \end{split}
  \end{equation*}
  or more concretely,
  \begin{equation}
    \label{eq:EP-uwv}
    \begin{split}
      \dot{\bPi} &= \bPi \times \bOmega + \bP \times \mathbf{v} - m \mathrm{g} l \boldsymbol{\chi} \times \bGamma, \\
      \dot{\bP} &= \bP \times \bOmega, \\
      \dot{\bGamma} &= \bGamma \times \bOmega
    \end{split}
  \end{equation}
  as in \cite{Le1997a,Le1997b,LeMa1997}.
\end{example}

\begin{example}[{Heavy top on movable base; see \citet{CoOh-EPwithBSym1}}]
  \label{ex:htmb}
  Consider the heavy top rotating on a movable base shown in Fig.~\ref{fig:htmb-details}.
  \begin{figure}[htbp]
    \centering
    \begin{tikzpicture}[scale=.875]
      \node[draw, cylinder, shape aspect=3, rotate=90, minimum height=1cm, minimum width=3cm] (c1)  at (0,1){};
      \draw[fill] (0,1.5) circle [radius=0.05];

      \fill[
      left color=gray!50!black,
      middle color=gray!50,
      right color=gray!50!black,
      shading=axis,
      opacity=0.25
      ] (3.75,3.75) -- (0,1.5) -- (1.3,5.2) [rotate=335] arc (180:340:1.49cm and 0.5cm); 
      \fill[
      shading=ball,
      ball color=gray!50,
      opacity=0.25
      ] (1.3,5.2) [rotate=335] arc (180:340:1.49cm and 0.5cm) arc (340:560:1.507cm); 

      \node at (1,5.5)    {$m$};
      \node at (2,0.8)    {$M$};

      \node[semitransparent] at (2,3.8) {\centerofmass};
      \draw[semithick,blue,->,>=stealth]  (0,1.5) -- (2,3.8);
      \node[left] at (1.9,3.7) {\color{blue}{$l\!\boldsymbol{\chi}$}};

      \draw[semithick,->,>=stealth]  (-5.5,1) -- (-5.5,1.5) node[above] {$\mathbf{e}_3$};
      \draw[semithick,->,>=stealth]  (-5.5,1) -- (-5.8,0.7) node[below] {$\mathbf{e}_1$};
      \draw[semithick,->,>=stealth]  (-5.5,1) -- (-5,1) node[below right] {$\mathbf{e}_2$};

      \draw[semithick,green!60!black][->,>=stealth]  (-5.5,1) -- (0,1.5);
      \node at (-3,1) {\color{green!60!black}{$\mathbf{x}$}};

      \draw[semithick,->,>=stealth]  (0,1.5) -- (-0.55,1.75) node[left] {$\mathbf{E}_1\!$};
      \draw[semithick,->,>=stealth]  (0,1.5) -- (0.29,1) node[right] {$\mathbf{E}_2$};
      \draw[semithick,->,>=stealth]  (0,1.5) -- (0.4,1.96) node[below right] {$\mathbf{E}_3$};

      \draw[very thick, ->,>=stealth, red!75!black] (-1.3,0.5) -- (-0.8,0.875);
      \node[below right, red!75!black] at (-1.3,0.575) {$\mathbf{u}$};

    \end{tikzpicture}
    \caption{Heavy top on a movable base.}
    \label{fig:htmb-details}
  \end{figure}
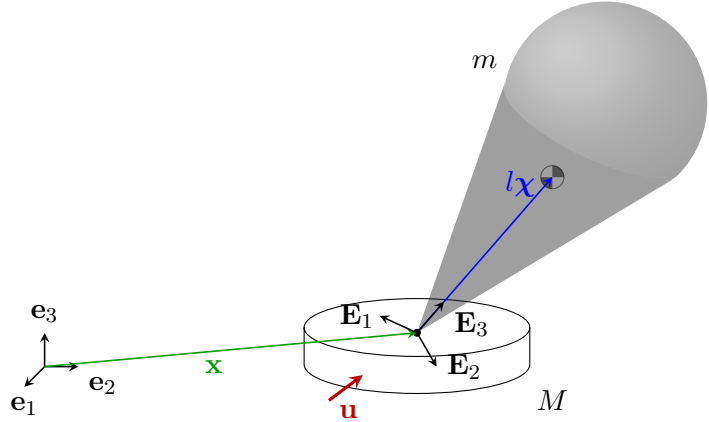
  The configuration space is again $\mathsf{S} = \SE(3)$, where the body frame is attached to the top at the junction point with the base (which is assumed to be a point mass $M$ for simplicity).
  The setting is almost the same as the underwater vehicle, and the kinetic energy is also defined in a similar manner.
    
  The only major difference is that the potential term depends not only on the orientation of the top but also on the height of the system:
  \begin{align*}
    U_{e_{3}}(R, \mathbf{x})
    &= m \mathrm{g} l \boldsymbol{\chi} \cdot  (R^{-1} \mathbf{e}_{3})
      + \bar{m} \mathrm{g} \mathbf{x} \cdot \mathbf{e}_{3} \\
    &= \mathrm{g}
      \begin{bmatrix}
        m l\!\boldsymbol{\chi} & \bar{m}
      \end{bmatrix}
                                \begin{bmatrix}
                                  R^{-1} & 0 \\
                                  \mathbf{x}^{T} & 1
                                \end{bmatrix}
                                                   \begin{bmatrix}
                                                     \mathbf{e}_{3} \\
                                                     0
                                                   \end{bmatrix} \\
    &= \mathrm{g} \mathfrak{m} \cdot \parentheses{ s^{T} e_{3} },
  \end{align*}
  where
  \begin{gather*}
    \bar{m} \defeq M + m,
    \qquad
    s = (R,\mathbf{x}) =
    \begin{bmatrix}
      R & \mathbf{x} \\
      \mathbf{0}^{T} & 1
    \end{bmatrix},
    \\
    \mathfrak{m} \defeq
    \begin{bmatrix}
      m l\!\boldsymbol{\chi} \\
      \bar{m}
    \end{bmatrix}
    \in \R^{4},
    \qquad
    e_{3} \defeq
    \begin{bmatrix}
      \mathbf{e}_{3} \\
      0
    \end{bmatrix}
      \in \R^{4}.
  \end{gather*}
  The potential $U_{e_{3}}$ is then clearly not $\SE(3)$-invariant.

  Let us define the extended potential $U\colon \SE(3) \times (\R^{4})^{*} \to \R$ by setting
  \begin{equation*}
    U(R, \mathbf{x}, a) \defeq \mathrm{g} \mathfrak{m} \cdot \parentheses{ s^{T} a }
  \end{equation*}
  so that $U(R, \mathbf{x}, e_{3}) = U_{e_{3}}(R, \mathbf{x})$.
  Also define the representation $\sigma\colon \SE(3) \to \GL(\R^{4})$ by
  \begin{equation}
    \label{eq:sigma-htmb}
    \sigma(s) y \defeq s y
    = \begin{bmatrix}
      R & \mathbf{x} \\
      \mathbf{0}^{T} & 1
    \end{bmatrix}
    \begin{bmatrix}
      \mathbf{y} \\
      y_{4}
    \end{bmatrix}
    = \begin{bmatrix}
      R \mathbf{y} + y_{4}\mathbf{x} \\
      y_{4}
    \end{bmatrix}.
  \end{equation}
  We note in passing that this representation was also used in the optimal-control formulation of the Kirchhoff elastic rod under gravity by \citet{BoBr2014,BoBr2016}.
  
  Identifying $(\R^{4})^{*}$ with $\R^{4}$ via the inner product, we have
  \begin{equation*}
    \parentheses{ \sigma^{*}(s) a } \cdot y
    = a \cdot \parentheses{ \sigma(s^{-1}) y }
    = a \cdot \parentheses{ s^{-1} y }
    = \parentheses{ (s^{T})^{-1} a } \cdot y.
  \end{equation*}
  Therefore, we have
  \begin{equation*}
    \sigma^{*}(s) a = (s^{T})^{-1} a.
  \end{equation*}
  As a result, we have, for any $s_{0}, s \in \SE(3)$,
  \begin{align*}
    U\parentheses{ s_{0} s, \sigma^{*}(s_{0}) a }
    &= \mathrm{g} \mathfrak{m} \cdot \parentheses{ (s_{0}s)^{T} (s_{0}^{T})^{-1} a } \\
    &= \mathrm{g} \mathfrak{m} \cdot \parentheses{ s^{T} a } \\
    &= U(s, a),
  \end{align*}
  that is, we have recovered the $\SE(3)$-symmetry.
  Therefore, writing $a = (\bGamma, h) \in (\R^{4})^{*}$---$h$ is the height of the base in the inertial frame---we may define the reduced potential $U\colon (\R^{4})^{*} \to \R$ as
  \begin{equation}
    \label{eq:U-htmb}
    U(\bGamma, h) = U(e, (\bGamma, h))
    = \mathrm{g} \mathfrak{m} \cdot (\bGamma, h)
    = m \mathrm{g} l \boldsymbol{\chi} \cdot \bGamma + \bar{m} \mathrm{g} h.
  \end{equation}

  Moreover,
  \begin{equation*}
    \sigma'(\bOmega,\mathbf{v}) y
    = \sigma_{y}'(\bOmega,\mathbf{v})
    = (\bOmega \times \mathbf{y} + y_{4}\mathbf{v},\, 0),
  \end{equation*}
  and thus
  \begin{align*}
    \parentheses{ \sigma'(\bOmega,\mathbf{v})^{*} a } \cdot y
    &= a \cdot \parentheses{ \sigma'(\bOmega,\mathbf{v}) y } \\
    &= (\bGamma, h) \cdot (\bOmega \times \mathbf{y} + y_{4}\mathbf{v}, 0) \\
    &= \bGamma \cdot (\bOmega \times \mathbf{y}) + y_{4}\bGamma \cdot \mathbf{v} \\
    &= (\bGamma \times \bOmega) \cdot \mathbf{y} + (\bGamma \cdot \mathbf{v}) y_{4},
  \end{align*}
  and so
  \begin{equation*}
    \sigma'(\bOmega,\mathbf{v})^{*} a
    = (\bGamma \times \bOmega,\, \bGamma \cdot \mathbf{v}).
  \end{equation*}
  Similarly,
  \begin{align*}
    \parentheses{ (\sigma'_{y})^{*}a } \cdot (\bOmega,\mathbf{v})
    &= a \cdot \parentheses{ \sigma'_{y}(\bOmega,\mathbf{v}) } \\
    &= \bGamma \cdot (\bOmega \times \mathbf{y}) + y_{4}\bGamma \cdot \mathbf{v} \\
    &= (\mathbf{y} \times \bGamma) \cdot \bOmega + (y_{4}\bGamma) \cdot \mathbf{v},
  \end{align*}
  and so we obtain, using \eqref{eq:K},
  \begin{equation}
    \label{eq:K-htmb}
    \mathbf{K}(y,a)
    = \parentheses{ \mathbf{K}_{\so(3)^{*}}(y,a), \mathbf{K}_{(\R^{3})^{*}}(y,a) }
    = (\sigma'_{y})^{*}a
    = (\mathbf{y} \times \bGamma, y_{4}\bGamma).
  \end{equation}
  Therefore, the Euler--Poincar\'e equation~\eqref{eq:EP} with advected parameters gives
  \begin{equation*}
    \begin{split}
    \od{}{t}\parentheses{ \pd{\ell}{\bOmega} } &= \ad_{\bOmega}^{*} \pd{\ell}{\bOmega} - \mathbf{J}\parentheses{\mathbf{v}, \pd{\ell}{\mathbf{v}}} + \mathbf{K}_{\so(3)^{*}}\parentheses{ \pd{\ell}{a}, a }, \\
    \od{}{t}\parentheses{ \pd{\ell}{\mathbf{v}} } &= \rho'(\bOmega)^{*} \pd{\ell}{\mathbf{v}} + \mathbf{K}_{(\R^{3})^{*}}\parentheses{ \pd{\ell}{a}, a }, \\
    \od{a}{t} &= \sigma'(\bOmega,\mathbf{v})^{*} a,
  \end{split}
  \end{equation*}
  or more concretely,
  \begin{equation*}
    \begin{split}
      \dot{\bPi} &= \bPi \times \bOmega + \bP \times \mathbf{v} - m \mathrm{g} l \boldsymbol{\chi} \times \bGamma, \\
      \dot{\bP} &= \bP \times \bOmega - \bar{m} \mathrm{g} \bGamma, \\
      \dot{\bGamma} &= \bGamma \times \bOmega, \\
      \dot{h} &= \bGamma \cdot \mathbf{v},
    \end{split}
  \end{equation*}
  as we have obtained in \cite{CoOh-EPwithBSym1}.
\end{example}

\subsection{Tracking Additional Advected Parameters}
\label{ssec:additional_advected_parameters}
In control applications of the Euler--Poincar\'e equation~\eqref{eq:EP} with advected parameters, one is often interested in tracking and stabilizing more variables in addition to the dynamical variables $(\xi, v, a) \in \mathfrak{s} \times V^{*}$.
Suppose that these additional variables $b$ live in the dual $Y^{*}$ of a vector space $Y$.
Being rather ancillary in nature, these variables can oftentimes be described as advected parameters via a representation $\tau\colon \mathsf{S} \to \GL(Y)$.
Note that it does not alter the equations of motion~\eqref{eq:EP}, i.e., one simply augments the equations of motion~\eqref{eq:EP} with
\begin{equation}
  \label{eq:b}
  \dot{b} = \tau'(\xi,v)^{*} b.
\end{equation}

\begin{example}[{Desired steady motion in underwater vehicle~\cite[Section~4.1]{Le1997b}}]
  \label{ex:desired_motion}
  Suppose that, for practical purposes, one would like to have the vehicle stay close to the desired orientation $R_{\rm d} \in \SO(3)$ and the desired velocity $\vd \in \R^{3}\backslash\{\mathbf{0}\}$ in the \textit{body} frame.
  
  The push-forward of the unit vector $\vd/\norm{ \vd }$ by $R_{\rm d}$ gives the \textit{fixed} unit vector $\mathbf{w}_{3} \defeq R_{\rm d} \vd/\norm{ \vd }$ in the \textit{spatial} frame.
  Then the pull-back of $\mathbf{w}_{3}$ by $R(t) \in \SO(3)$ gives the \textit{time-dependent} unit vector $\bTheta(t) \defeq R(t)^{T}\mathbf{w}_{3}$ in the \textit{body} frame at any time $t$; see Fig.~\ref{fig:uwv-desired_motion}.
  Then one can think of the deviation of $\bTheta(t)$ from $\vd/\norm{ \vd }$ as an indicator of deviation from the desired steady motion.

  \begin{figure}[hbtp]
    \centering
    \includegraphics[width=0.55\linewidth]{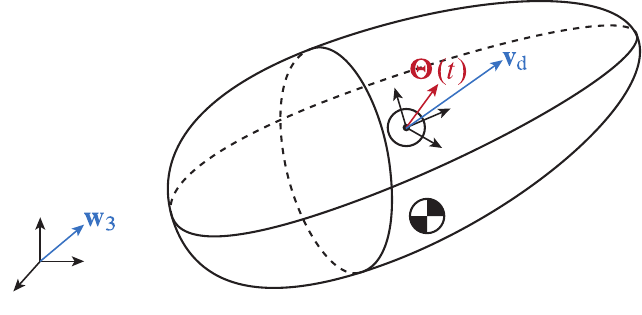}
    \caption{
      The desired velocity $\mathbf{v}_{\rm d}$ is a \textit{fixed} vector in the body frame, and $\mathbf{w}_{3} \defeq R_{\rm d} \vd/\norm{ \vd }$ is the \textit{fixed} unit vector in the spatial frame indicating the direction of the desired velocity when the vehicle is in the (fixed) desired orientation $R_{\rm d} \in \SO(3)$.
      On the other hand, $\bTheta(t) \defeq R(t)^{T}\mathbf{w}_{3}$ is the \textit{time-dependent} vector in the body frame that indicates the direction of $\mathbf{w}_{3}$ seen from the body frame, where $R(t) \in \SO(3)$ indicates the orientation of the vehicle at time $t$.}
    \label{fig:uwv-desired_motion}
  \end{figure}
  
  Specifically, if $R(0) = I$ then $\bTheta(t) \defeq R(t)^{T}\bTheta(0)$.
  This suggests that we set $Y = \R^{3}$ and define the representation $\tau\colon \SE(3) \to \GL(\R^{3})$ so that $\tau(R,\mathbf{x})^{*} \bTheta = R^{T} \bTheta$.
  In fact, defining $\tau(R,\mathbf{x}) \mathbf{y} = R \mathbf{y}$ would result in the desired expression.
  Note that $\tau$ is exactly the same representation as $\sigma$ from Example~\ref{ex:uwv}.
  As a result, we have $\tau'(\bOmega,\mathbf{v}) \mathbf{y} = \bOmega \times \mathbf{y}$, and so $\tau'(\bOmega,\mathbf{v})^{*} \bTheta = \bTheta \times \bOmega$ in view of \eqref{eq:sigma'-uwv} and \eqref{eq:sigma'star-uwv}.
  Hence the additional equation~\eqref{eq:b} becomes
  \begin{equation*}
    \dot{\bTheta} = \bTheta \times \bOmega.
  \end{equation*}
  
  As we shall see later, one may augment the Euler--Poincar\'e equation~\eqref{eq:EP-uwv} with the above equation to formulate the problem of finding a control to stabilize the direction of $\bTheta$, thereby achieving the stability of the desired steady motion.
\end{example}

\begin{example}[{Translational drift in underwater vehicle~\cite[Section~4.2]{Le1997b}}]
  \label{ex:drift}
  Suppose that, instead of tracking the desired velocity and orientation, one would like to track undesired drift of the underwater vehicle in those directions perpendicular to the direction $\mathbf{w}_{3}$ of the desired velocity in the \textit{spatial} frame.

  We show how to exploit representations and advected parameters to formulate the problem; this results in a more succinct formulation of the problem from \cite[Section~4.2]{Le1997b}.
  As we shall see in Example~\ref{ex:uwv-drift} below, our formulation still yields the same control law as that of \cite{Le1997b}.
  
  Let $\{ \mathbf{w}_{1}, \mathbf{w}_{2} \}$ be an orthonormal basis for $\Span\{ \mathbf{w}_{3} \}^{\perp}$ in the spatial frame defined so that $\{ \mathbf{w}_{1}, \mathbf{w}_{2}, \mathbf{w}_{3} \}$ is a right-handed system, i.e., $\mathbf{w}_{1} \times \mathbf{w}_{2} = \mathbf{w}_{3}$.
  Then the drift we would like to track (and would like to later prevent with controls) is $\boldsymbol{\delta}(t) = (\delta_{1}(t), \delta_{2}(t)) \defeq (\mathbf{x}(t) \cdot \mathbf{w}_{1}, \mathbf{x}(t) \cdot \mathbf{w}_{2}) \in \R^{2}$ at any time $t$; see Fig.~\ref{fig:uwv-drift}.
  
  \begin{figure}[hbtp]
    \centering
    \includegraphics[width=0.55\linewidth]{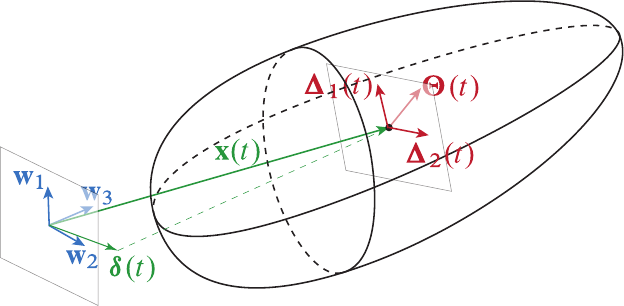}
    \caption{
      Fix an orthonormal basis $\{ \mathbf{w}_{1}, \mathbf{w}_{2} \}$ for $\Span\{ \mathbf{w}_{3} \}^{\perp}$, where $\mathbf{w}_{3}$ is defined in Fig.~\ref{fig:uwv-desired_motion}.
      Then $\bDelta_{i}(t) \defeq R(t)^{T} \mathbf{w}_{i}$ gives the directions of $\mathbf{w}_{i}$ seen from the body frame for $i = 1, 2$, whereas $\boldsymbol{\delta}(t) \defeq (\mathbf{x}(t) \cdot \mathbf{w}_{1}, \mathbf{x}(t) \cdot \mathbf{w}_{2})$ gives the undesired drift of the vehicle.
      The vector $\bTheta(t)$ is not tracked in this problem but is shown as a reference because $\{ \bDelta_{1}(t), \bDelta_{2}(t), \bTheta(t) \}$ defines an orthonormal basis for the body frame.
    }
    \label{fig:uwv-drift}
  \end{figure}
  
  Notice that, defining $\bDelta_{i}(t) \defeq R(t)^{T} \mathbf{w}_{i} \in \R^{3}$ and writing $\Delta_{i}(t) = (\bDelta_{i}(t), \delta_{i}(t)) \in \R^{4}$ for $i = 1, 2$, we have
  \begin{align*}
    \begin{bmatrix}
      \Delta_{1}(t) & \Delta_{2}(t)
    \end{bmatrix}
    = \begin{bmatrix}
      \bDelta_{1}(t) & \bDelta_{2}(t) \\
      \delta_{1}(t) & \delta_{2}(t)
    \end{bmatrix}
    &= \begin{bmatrix}
      R(t)^{T} \mathbf{w}_{1} &  R(t)^{T} \mathbf{w}_{2} \\
      \mathbf{x}(t) \cdot \mathbf{w}_{1} & \mathbf{x}(t) \cdot \mathbf{w}_{2}
    \end{bmatrix} \\
    &= \begin{bmatrix}
      R(t)^{T} & 0 \\
      \mathbf{x}(t)^{T} & 1
    \end{bmatrix}
    \begin{bmatrix}
      \mathbf{w}_{1} & \mathbf{w}_{2}\\
      0 & 0
    \end{bmatrix} \\
    &= s(t)^{T} W,
  \end{align*}
  where we wrote $s(t) = \begin{tbmatrix} R(t) & \mathbf{x}(t) \\ \mathbf{0}^{T} & 1 \end{tbmatrix}$ and $W \defeq \begin{tbmatrix}
    \mathbf{w}_{1} & \mathbf{w}_{2}\\
    0 & 0
  \end{tbmatrix}$.
  
  This suggests us to set $Y = \R^{4} \times \R^{4}$ and consider the representation $\tau\colon \SE(3) \to \GL(\R^{4} \times \R^{4})$ so that $\tau(s)^{*} (\Delta_{1}, \Delta_{2}) = (s^{T} \Delta_{1}, s^{T} \Delta_{2})$.
  In fact, we see that $\tau(s) (y, z) = (s y, s z)$ would do, and then since $\tau$ is two copies of the representation $\sigma$ from \eqref{eq:sigma-htmb}, we have, writing $y = (\mathbf{y}, y_{4})$ and $z = (\mathbf{z}, z_{4})$,
  \begin{equation}
    \label{eq:tau'-drift}
    \tau'(\bOmega,\mathbf{v}) (y, z)
    = \tau_{(y, z)}'(\bOmega,\mathbf{v})
    = \parentheses{    
      \begin{bmatrix}
        \bOmega \times \mathbf{y} + y_{4}\mathbf{v} \\
        0
      \end{bmatrix},
      \begin{bmatrix}
        \bOmega \times \mathbf{z} + z_{4}\mathbf{v} \\
        0
      \end{bmatrix}
    }.
  \end{equation}
  Therefore, we have
  \begin{equation*}
    \tau'(\bOmega,\mathbf{v})^{*} (\Delta_{1}, \Delta_{2}) =
    \parentheses{
      \begin{bmatrix}
        \bDelta_{1} \times \bOmega \\
        \bDelta_{1} \cdot \mathbf{v}
      \end{bmatrix},
      \begin{bmatrix}
        \bDelta_{2} \times \bOmega \\
        \bDelta_{2} \cdot \mathbf{v}
      \end{bmatrix}
    }.
  \end{equation*}
  Hence Eq.~\eqref{eq:b} for tracking additional variables becomes, for $i = 1, 2$,
  \begin{equation*}
    \dot{\bDelta}_{i} = \bDelta_{i} \times \bOmega,
    \qquad
    \dot{\delta}_{i} = \bDelta_{i} \cdot \mathbf{v}.
  \end{equation*}

  Our formulation is much simpler than that of \citet{Le1997b}; yet it turns out to be equivalent to hers.
  To see this, let us first set $Q \defeq [ \mathbf{w}_{1}\ \mathbf{w}_{2}\ \mathbf{w}_{3} ]^{T}$.
  Then $Q \in \SO(3)$ because $\{ \mathbf{w}_{1}, \mathbf{w}_{2}, \mathbf{w}_{3} \}$ is a right-handed orthonormal basis.
  Therefore, we have $Q Q^{T} = I$ or $Q \mathbf{w}_{i} = \mathbf{e}_{i}$ for $i = 1, 2, 3$; specifically $Q R_{\rm d} \vd = \norm{\vd}\,\mathbf{e}_{3}$, i.e., this is the same $Q$ defined in \cite{Le1997b}.
  Then
  \begin{equation*}
    \delta_{i} = \mathbf{x}^{T} \mathbf{w}_{i} = \mathbf{x}^{T} Q^{T} \mathbf{e}_{i} = (Q\mathbf{x})^{T} \mathbf{e}_{i}.
  \end{equation*}
  So $\boldsymbol{\delta} = (\delta_{1}, \delta_{2})$ gives the first two components of $Q\mathbf{x}$; but then this implies that $\boldsymbol{\delta}$ is nothing but the first two components of $\tilde{b}$ in \cite{Le1997b}.
  Notice that our evolution equation for $\boldsymbol{\delta}$ is much simpler than that for $\tilde{b}$ in \cite{Le1997b}.
  This demonstrates the advantage of our geometric approach using representations and advected parameters.
  We shall continue the comparison in Example~\ref{ex:uwv-drift} below to show that our formulation yields the same control law in a simpler form as well.
\end{example}

\section{Potential Shaping and Matching Conditions}
\label{sec:potential_shaping}
\subsection{Controlled Euler--Poincar\'e Equation with Advected Parameters}
Let $\mathcal{I} \subset \R$ be the time interval of interest, and apply control $u = (u_{\mathfrak{g}^{*}}, u_{V^{*}}) \colon \mathcal{I} \to \mathfrak{g}^{*} \times V^{*}$ to the Euler--Poincar\'e equation~\eqref{eq:EP} augmented with the equation~\eqref{eq:b} for additional variables $b$ to track, i.e.,
\begin{equation}
  \label{eq:ControlledEP-1}
  \begin{split}
    \od{}{t}\parentheses{ \fd{\ell}{\xi} } &= \ad_{\xi}^{*} \fd{\ell}{\xi} - \mathbf{J}\parentheses{v, \fd{\ell}{v}} + \mathbf{K}_{\mathfrak{g}^{*}}\parentheses{ \fd{\ell}{a}, a } + u_{\mathfrak{g}^{*}}, \\
    \od{}{t}\parentheses{ \fd{\ell}{v} } &= \rho'(\xi)^{*} \fd{\ell}{v} + \mathbf{K}_{V^{*}}\parentheses{ \fd{\ell}{a}, a } + u_{V^{*}}
  \end{split}
\end{equation}
coupled with either just
\begin{subequations}
  \begin{equation}
    \label{eq:ControlledEP-2a}
    \od{a}{t} = \sigma'(\xi,v)^{*} a,
  \end{equation}
  or, with additional advected parameters $b$ to track,
  \begin{equation}
    \label{eq:ControlledEP-2b}
    \od{a}{t} = \sigma'(\xi,v)^{*} a,
    \qquad
    \od{b}{t} = \tau'(\xi,v)^{*} b.
  \end{equation}
\end{subequations}
We note that the control is applied only to the $\mathfrak{g}^{*} \times V^{*}$-part of the equation, not to the $X^{*}$-part for the advected parameters.

Our goal is to find a control that stabilizes an equilibrium of the above set of equations.
The first step towards the goal is the matching using the controlled Lagrangian considered in various settings~\cite{Ha1999,Ha2000,BlChLeMa2001,BlLeMa1999b,BlLeMa2000,BlLeMa2001,ChBlLeMaWo2002,ChMa2004,BlOrVa2002,OrPeNiSi1998,OrScMaMa2001}.
More specifically, we would like to find a new Lagrangian $\tilde{\ell}$---called the \textit{controlled Lagrangian}---such that its corresponding \textit{uncontrolled} Euler--Poincar\'e equation becomes identical to the original \textit{controlled} system~\eqref{eq:ControlledEP-1} with \eqref{eq:ControlledEP-2a} or \eqref{eq:ControlledEP-2b}.

We are particularly interested in the \textit{potential shaping}, i.e., we seek the new Lagrangian $\tilde{\ell}$ by changing only the potential term in the original Lagrangian $\ell$ in \eqref{eq:ell}.

In the sections to follow, we will show two different types of matching via potential shaping.
In Section~\ref{ssec:matching-1}, we will show how one can reduce the equation~\eqref{eq:ControlledEP-2a} using a subrepresentation so that the controlled system becomes the Euler--Poincar\'e equation involving \textit{less} advected parameters.
In Section~\ref{ssec:matching-2}, we will show how to incorporate the additional advected parameter $b$ in \eqref{eq:ControlledEP-2b} into the control so that one can achieve stability in the system involving \textit{more} advected parameters.

\subsection{Matching via Potential Shaping I: Reducing to Subrepresentation}
\label{ssec:matching-1}
In our previous work~\cite{CoOh-EPwithBSym1} on the heavy top on a movable base from Example~\ref{ex:htmb} (see also Example~\ref{ex:htmb-matching} below), we used an ad-hoc potential shaping to slightly simplify the system, and then applied a kinetic shaping to stabilize an equilibrium of the system.
We generalize this idea in this subsection.

Suppose that $\sigma\colon \mathsf{S} \to \GL(X)$ has a subrepresentation $\sigma|_{\tilde{X}}\colon \mathsf{S} \to \GL(\tilde{X})$ with some subspace $\tilde{X} \subset X$.
Let $\tilde{\mathbf{K}}\colon \tilde{X} \times \tilde{X}^{*} \to \mathfrak{s}^{*}$ be the corresponding momentum map, and as a result, the Euler--Poincar\'e equation with the Lagrangian $\tilde{\ell}\colon \mathfrak{s} \times \tilde{X}^{*} \to \R$ becomes
\begin{equation}
  \label{eq:EP-tilde-I}
  \begin{split}
    \od{}{t}\parentheses{ \fd{\tilde{\ell}}{\xi} } &= \ad_{\xi}^{*} \fd{\tilde{\ell}}{\xi} - \mathbf{J}\parentheses{v, \fd{\tilde{\ell}}{v}} + \tilde{\mathbf{K}}_{\mathfrak{g}^{*}}\parentheses{ \fd{\tilde{\ell}}{\tilde{a}}, \tilde{a} }, \\
    \od{}{t}\parentheses{ \fd{\tilde{\ell}}{v} } &= \rho'(\xi)^{*} \fd{\tilde{\ell}}{v} + \tilde{\mathbf{K}}_{V^{*}}\parentheses{ \fd{\tilde{\ell}}{\tilde{a}}, \tilde{a} }, \\
    \od{\tilde{a}}{t} &= \sigma'(\xi,v)^{*} \tilde{a}.
  \end{split}
\end{equation}
Now our goal is the matching between the controlled equations \eqref{eq:ControlledEP-1} with \eqref{eq:ControlledEP-2a} and the above equation~\eqref{eq:EP-tilde-I}.
Note however that this is not a strict equivalence; it rather effectively discards some components of the original advected parameters:

\begin{theorem}[Matching via Subrepresentation]
  Let $\ell\colon \mathfrak{s} \times X^{*} \to \R$ be the Lagrangian defined in \eqref{eq:ell}, and $\tilde{\ell}\colon \mathfrak{s} \times \tilde{X}^{*} \to \R$ be the controlled Lagrangian defined with a modified potential $\tilde{U}\colon \tilde{X}^{*} \to \R$ as
  \begin{equation}
    \label{eq:tilde_ell-I}
    \tilde{\ell}(\xi, v, \tilde{a}) =  \frac{1}{2} \metric{(\xi,v)}{(\xi,v)} - \tilde{U}(\tilde{a}).
  \end{equation}
  The controlled Euler--Poincar\'e equation~\eqref{eq:ControlledEP-1} and \eqref{eq:ControlledEP-2a} match the Euler--Poincar\'e equation~\eqref{eq:EP-tilde-I} if and only if the control $u = (u_{\mathfrak{g}^{*}}, u_{V^{*}})$ and the potential $U$ satisfy
  \begin{equation}
    \label{eq:u-I}
    \begin{split}
      u_{\mathfrak{g}^{*}} &= \tilde{\mathbf{K}}_{\mathfrak{g}^{*}}\parentheses{ \fd{\tilde{U}}{\tilde{a}}, \tilde{a} } - \mathbf{K}_{\mathfrak{g}^{*}}\parentheses{ \fd{U}{a}, a }, \\
      u_{V^{*}} &= \mathbf{K}_{V^{*}}\parentheses{ \fd{U}{a}, a } - \tilde{\mathbf{K}}_{V^{*}}\parentheses{ \fd{\tilde{U}}{\tilde{a}}, \tilde{a} }.
    \end{split}
  \end{equation}
\end{theorem}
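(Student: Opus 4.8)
The plan is to establish the equivalence by a direct, term-by-term comparison of the controlled Euler--Poincar\'e system~\eqref{eq:ControlledEP-1}--\eqref{eq:ControlledEP-2a} against the target system~\eqref{eq:EP-tilde-I}, exploiting that the controlled Lagrangian $\tilde{\ell}$ in~\eqref{eq:tilde_ell-I} differs from $\ell$ in~\eqref{eq:ell} \emph{only} through the potential. After one structural observation the whole statement reduces to linear algebra, and both directions of the ``if and only if'' fall out of the same identity.

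First I would note that the kinetic parts of $\ell$ and $\tilde{\ell}$ are the same left-invariant metric, and neither $U$ nor $\tilde{U}$ depends on the velocities; hence the fiber derivatives coincide, $\fd{\ell}{\xi} = \fd{\tilde{\ell}}{\xi}$ and $\fd{\ell}{v} = \fd{\tilde{\ell}}{v}$, and so do their time derivatives. Comparing the $\mathfrak{g}^{*}$-equations of~\eqref{eq:ControlledEP-1} and~\eqref{eq:EP-tilde-I}, the left-hand sides agree and the terms $\ad_{\xi}^{*}\fd{\ell}{\xi}$ and $\mathbf{J}(v, \fd{\ell}{v})$ cancel against their tilded counterparts; likewise $\rho'(\xi)^{*}\fd{\ell}{v}$ cancels in the $V^{*}$-equations. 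Matching is therefore equivalent to the two balance relations $\mathbf{K}_{\mathfrak{g}^{*}}(\fd{\ell}{a}, a) + u_{\mathfrak{g}^{*}} = \tilde{\mathbf{K}}_{\mathfrak{g}^{*}}(\fd{\tilde{\ell}}{\tilde{a}}, \tilde{a})$ and $\mathbf{K}_{V^{*}}(\fd{\ell}{a}, a) + u_{V^{*}} = \tilde{\mathbf{K}}_{V^{*}}(\fd{\tilde{\ell}}{\tilde{a}}, \tilde{a})$. Solving for $u$, substituting $\fd{\ell}{a} = -\fd{U}{a}$ and $\fd{\tilde{\ell}}{\tilde{a}} = -\fd{\tilde{U}}{\tilde{a}}$, and using that $\mathbf{K}(\,\cdot\,, a) = (\sigma'_{(\cdot)})^{*}a$ is linear in its first slot by~\eqref{eq:K} (since $x \mapsto \sigma'_{x}$ is linear), yields the control law~\eqref{eq:u-I}. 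Reading this equivalence forward gives necessity and backward gives sufficiency, so a single computation settles both directions.

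Before running this argument I would pin down the meaning of ``matching,'' since $a \in X^{*}$ and $\tilde{a} \in \tilde{X}^{*}$ inhabit different spaces. The key is that $\tilde{X} \subset X$ is $\sigma$-invariant, so the dual action descends: restriction of functionals to $\tilde{X}$ (equivalently, passage to the quotient $X^{*}/\tilde{X}^{\circ} \cong \tilde{X}^{*}$ by the annihilator) carries $\dot{a} = \sigma'(\xi,v)^{*}a$ on $X^{*}$ to $\dot{\tilde{a}} = \sigma'(\xi,v)^{*}\tilde{a}$ on $\tilde{X}^{*}$. This is precisely the sense in which the control ``discards some components'' of the original advected parameter, and it shows that the third equation of~\eqref{eq:EP-tilde-I} is the projection of~\eqref{eq:ControlledEP-2a} rather than an independent constraint. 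I would also confirm that $\tilde{\mathbf{K}}$ is genuinely the momentum map of $\sigma|_{\tilde{X}}$, so that the two balance relations are well posed.

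I expect the principal obstacle to be conceptual rather than computational: making the identification between $a$ and $\tilde{a}$ through the subrepresentation precise, and checking that $\mathbf{K}(\fd{U}{a}, a)$ and $\tilde{\mathbf{K}}(\fd{\tilde{U}}{\tilde{a}}, \tilde{a})$ are evaluated consistently, noting that $\fd{\tilde{U}}{\tilde{a}} \in \tilde{X}$ while $\fd{U}{a} \in X$, so the two momentum-map terms cannot be merged. The residual difficulty is sheer bookkeeping: the sign flips from $\fd{\ell}{a} = -\fd{U}{a}$ and $\fd{\tilde{\ell}}{\tilde{a}} = -\fd{\tilde{U}}{\tilde{a}}$, combined with the linearity of $\mathbf{K}$ and $\tilde{\mathbf{K}}$, must be tracked separately in the $\mathfrak{g}^{*}$- and $V^{*}$-components to land on the exact signs displayed in~\eqref{eq:u-I}.
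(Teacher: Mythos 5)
Your proposal is correct and follows essentially the same route as the paper: observe that $\tfd{\tilde\ell}{\xi}=\tfd{\ell}{\xi}$ and $\tfd{\tilde\ell}{v}=\tfd{\ell}{v}$ since only the potential is modified, cancel the common terms, reduce matching to the two balance relations $\mathbf{K}(\tfd{\ell}{a},a)+u=\tilde{\mathbf{K}}(\tfd{\tilde\ell}{\tilde a},\tilde a)$, and substitute $\tfd{\ell}{a}=-\tfd{U}{a}$, $\tfd{\tilde\ell}{\tilde a}=-\tfd{\tilde U}{\tilde a}$ using linearity of the momentum maps in the first slot (your added care about the subrepresentation identification of $\tilde a$ is a welcome refinement the paper leaves implicit). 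One caveat: carrying out your final substitution honestly gives the \emph{same} sign pattern $u=\mathbf{K}\parentheses{\fd{U}{a},a}-\tilde{\mathbf{K}}\parentheses{\fd{\tilde U}{\tilde a},\tilde a}$ in both the $\mathfrak{g}^{*}$- and $V^{*}$-components, so you should not expect to ``land on'' the asymmetric signs displayed in \eqref{eq:u-I}---the $\mathfrak{g}^{*}$-line there appears to have its two terms transposed (harmless in Example~\ref{ex:htmb-matching} only because $\mathbf{u}_{\so(3)^{*}}=\mathbf{0}$ there).
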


\begin{proof}
  We see that $\tfd{\tilde{\ell}}{\xi} = \tfd{\ell}{\xi}$ and $\tfd{\tilde{\ell}}{v} = \tfd{\ell}{v}$, and thus the matching is achieved if and only if the control $u = (u_{\mathfrak{g}^{*}}, u_{V^{*}})$ and $\tilde{\mathbf{K}}$ satisfy
  \begin{equation*}
    \mathbf{K}_{\mathfrak{g}^{*}}\parentheses{ \fd{\ell}{a}, a } + u_{\mathfrak{g}^{*}} = \tilde{\mathbf{K}}_{\mathfrak{g}^{*}}\parentheses{ \fd{\tilde{\ell}}{\tilde{a}}, \tilde{a} },
    \qquad
    \mathbf{K}_{V^{*}}\parentheses{ \fd{\ell}{a}, a } + u_{V^{*}} = \tilde{\mathbf{K}}_{V^{*}}\parentheses{ \fd{\tilde{\ell}}{\tilde{a}}, \tilde{a} }.
  \end{equation*}
  These conditions reduce to \eqref{eq:u-I} in view of the Lagrangians \eqref{eq:ell} and \eqref{eq:tilde_ell-I}. \qed
\end{proof}


We note that the matching condition~\eqref{eq:u-I} implies that the expressions for $\mathbf{K}_{\mathfrak{g}^{*}}\parentheses{ \tfd{U}{a}, a }$ and $\mathbf{K}_{V^{*}}\parentheses{ \tfd{U}{a}, a }$ contain variables $\tilde{a}$ only.
Although this is rather restrictive, it is what happens in the example from our previous work mentioned above:

\begin{example}[{Potential shaping for heavy top on movable base~\cite{CoOh-EPwithBSym1}}]
  \label{ex:htmb-matching}
  Let us apply controls to the system from Example~\ref{ex:htmb}:
  \begin{equation}
    \label{eq:ControlledEP-htmb}
    \begin{split}
      \dot{\bPi} &= \bPi \times \bOmega + \bP \times \mathbf{v} - m \mathrm{g} l \boldsymbol{\chi} \times \bGamma, \\
      \dot{\bP} &= \bP \times \bOmega - \bar{m} \mathrm{g} \bGamma + \mathbf{u}_{(\R^{3})^{*}}, \\
      \dot{\bGamma} &= \bGamma \times \bOmega, \\
      \dot{h} &= \bGamma \cdot \mathbf{v},
    \end{split}
  \end{equation}
  Note that we do not have any control in the first set of equations, i.e., $\mathbf{u}_{\so(3)^{*}} = \mathbf{0}$ because we would like to stabilize the system by applying controls only to the base (not to the top); see Fig.~\ref{fig:htmb-details}.
  
  Then the matching conditions~\eqref{eq:u-I} become
  \begin{equation*}
    \mathbf{K}_{\so(3)^{*}}\parentheses{ \fd{U}{a}, a } = \tilde{\mathbf{K}}_{\so(3)^{*}}\parentheses{ \fd{\tilde{U}}{\tilde{a}}, \tilde{a} },
    \quad
    \mathbf{K}_{(\R^{3})^{*}}\parentheses{ \fd{U}{a}, a } - \mathbf{u}_{(\R^{3})^{*}} = \tilde{\mathbf{K}}_{(\R^{3})^{*}}\parentheses{ \fd{\tilde{U}}{\tilde{a}}, \tilde{a} },
  \end{equation*}
  where $a = (\bGamma,h)$.
  Using \eqref{eq:K-htmb}, they give
  \begin{equation*}
    \tilde{\mathbf{K}}_{\so(3)^{*}}\parentheses{ \fd{\tilde{U}}{\tilde{a}}, \tilde{a} } = \pd{U}{\bGamma} \times \bGamma,
    \quad
    \tilde{\mathbf{K}}_{(\R^{3})^{*}}\parentheses{ \fd{\tilde{U}}{\tilde{a}}, \tilde{a} } = \pd{U}{h} \bGamma - \mathbf{u}_{(\R^{3})^{*}}
    = \bar{m} \mathrm{g} \bGamma - \mathbf{u}_{(\R^{3})^{*}}.
  \end{equation*}
  
  The first condition suggests us to use the subrepresentation of $\sigma$ (see \eqref{eq:sigma-htmb}) on $\R^{3}$:
  \begin{equation*}
    \tilde{\sigma}\colon \SE(3) \to \GL(\R^{3});
    \qquad
    \tilde{\sigma}(R,\mathbf{x}) \mathbf{y} \defeq R\mathbf{y},
  \end{equation*}
  because this implies that one should take $\tilde{a} = \bGamma$; note that this is the $\sigma$ used for the underwater vehicle in Example~\ref{ex:uwv}.
  In fact, the corresponding momentum map $\tilde{\mathbf{K}}$ would be the same as \eqref{eq:K-uwv}:
  \begin{equation*}
    \tilde{\mathbf{K}}(\mathbf{y},\bGamma)
    = \parentheses{ \tilde{\mathbf{K}}_{\so(3)^{*}}(\mathbf{y},\bGamma), \tilde{\mathbf{K}}_{(\R^{3})^{*}}(\mathbf{y},\bGamma) }
    = (\mathbf{y} \times \bGamma, \mathbf{0}),
  \end{equation*}
  and so the first matching conditions yields
  \begin{equation*}
    \pd{U}{\bGamma} \times \bGamma = \pd{\tilde{U}}{\bGamma} \times \bGamma,
    \qquad
    \mathbf{u}_{(\R^{3})^{*}} = \bar{m} \mathrm{g} \bGamma.
  \end{equation*}
  This suggests us to define the new potential $\tilde{U}$ in terms of the original one $U$ from \eqref{eq:U-htmb} as follows:
  \begin{equation*}
    \tilde{U}(\bGamma) \defeq U(\bGamma,0) = m \mathrm{g} l \boldsymbol{\chi} \cdot \bGamma.
  \end{equation*}
  As a result, the controlled system~\eqref{eq:ControlledEP-htmb} becomes
  \begin{equation*}
    \begin{split}
      \dot{\bPi} &= \bPi \times \bOmega + \bP \times \mathbf{v} - m \mathrm{g} l \boldsymbol{\chi} \times \bGamma, \\
      \dot{\bP} &= \bP \times \bOmega, \\
      \dot{\bGamma} &= \bGamma \times \bOmega.
    \end{split}
  \end{equation*}
  So we effectively dropped the height $h$ from the formulation, and also that this is an Euler--Poincar\'e equation on $\se(3) \times (\R^{3})^{*}$ as opposed to $\se(3) \times (\R^{4})^{*}$.

  We note that we need to apply additional control to the base to stabilize the upright spinning position.
  This was done by kinetic shaping in the companion paper \cite{CoOh-EPwithBSym1} after applying the above potential shaping.
  
  The above potential shaping is rather simple in hindsight: It is simply applying the force to the base to cancel the gravitational force.
  However, it has an important implication that the system after the potential shaping has one more Casimir than the original system because the original system is defined on $\se(3) \times (\R^{4})^{*}$ whereas the new system on $\se(3) \times (\R^{3})^{*}$.
  One can then apply the kinetic shaping to the new system maintaining the new Casimir as an invariant.
  This facilitates the use of the energy-Casimir method; see \cite{CoOh-EPwithBSym1} for details.
\end{example}

\subsection{Matching via Potential Shaping II: With Additional Variables}
\label{ssec:matching-2}
In Section~\ref{ssec:additional_advected_parameters}, we showed how to track additional advected parameters.
In practical control problems, the equilibrium to stabilize is sometimes better characterized in terms of those advected parameters in addition to the original variables.
In this subsection, we continue our discussion from Section~\ref{ssec:additional_advected_parameters} to formulate a matching condition that applies to such settings.

The idea is to find an alternative form of Lagrangian $\tilde{\ell}\colon \mathfrak{s} \times X^{*} \times Y^{*} \to \R$ such that the corresponding Euler--Poincar\'e equation matches with \eqref{eq:ControlledEP-1} along with \eqref{eq:ControlledEP-2b}.
Note that the Lagrangian $\tilde{\ell}$ now depends on the additional variables $b$ in $Y^{*}$ as well.
Therefore, the Euler--Poincar\'e equation is now coupled with the equation~\eqref{eq:b} for $b$:
\begin{equation}
  \label{eq:EP-tilde-II}
  \begin{split}
    \od{}{t}\parentheses{ \fd{\tilde{\ell}}{\xi} } &= \ad_{\xi}^{*} \fd{\tilde{\ell}}{\xi} - \mathbf{J}\parentheses{v, \fd{\tilde{\ell}}{v}} + \mathbf{K}_{\mathfrak{g}^{*}}\parentheses{ \fd{\tilde{\ell}}{a}, a } + \mathbf{M}_{\mathfrak{g}^{*}}\parentheses{ \fd{\tilde{\ell}}{b}, b }, \\
    \od{}{t}\parentheses{ \fd{\tilde{\ell}}{v} } &= \rho'(\xi)^{*} \fd{\tilde{\ell}}{v} + \mathbf{K}_{V^{*}}\parentheses{ \fd{\tilde{\ell}}{a}, a } + \mathbf{M}_{V^{*}}\parentheses{ \fd{\tilde{\ell}}{b}, b }
  \end{split}
\end{equation}
along with \eqref{eq:ControlledEP-2b}, where we defined the momentum map $\mathbf{M}\colon Y \times Y^{*} \to \mathfrak{s}^{*}$ corresponding to the representation $\tau\colon \mathsf{S} \to \GL(Y)$ as
\begin{equation*}
  \mathbf{M}(y, b)
  = \parentheses{ \mathbf{M}_{\mathfrak{g}^{*}}(y, b),\, \mathbf{M}_{V^{*}}(y, b) }
  \defeq (\tau_{y}')^{*}b,
\end{equation*}
just like how we defined the momentum map $\mathbf{K}$ in Section~\ref{ssec:recovery}.

\begin{theorem}[Matching with Additional Variables]
  Let $\ell\colon \mathfrak{s} \times X^{*} \to \R$ be the Lagrangian defined in \eqref{eq:ell}, and $\tilde{\ell}\colon \mathfrak{s} \times \tilde{X}^{*} \times \tilde{Y}^{*} \to \R$ be the controlled Lagrangian that differs from $\ell$ by the additional potential term $\tilde{U}\colon X^{*} \times Y^{*} \to \R$, i.e.,
  \begin{equation*}
    \tilde{\ell}(\xi, v, a, b)
    = \ell(\xi, v, a) - \tilde{U}(a,b)
    = \frac{1}{2} \metric{(\xi,v)}{(\xi,v)} - U(a) - \tilde{U}(a,b).
  \end{equation*}
  The controlled Euler--Poincar\'e equations~\eqref{eq:ControlledEP-1} and \eqref{eq:ControlledEP-2b} match the Euler--Poincar\'e equations~\eqref{eq:EP-tilde-II} and \eqref{eq:ControlledEP-2b} if and only if the control $u$ and the additional potential term $\tilde{U}$ satisfy
  \begin{equation}
    \label{eq:u-II}
    u = -\mathbf{K}\parentheses{ \fd{\tilde{U}}{a}, a } - \mathbf{M}\parentheses{ \fd{\tilde{U}}{b}, b }.
  \end{equation}
\end{theorem}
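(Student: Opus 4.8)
The plan is to imitate the proof of the preceding subrepresentation theorem, exploiting the defining feature of potential shaping: the kinetic part of the Lagrangian is untouched. First I would observe that, since $\tilde{\ell}$ differs from $\ell$ only through the potential terms $U(a)$ and $\tilde{U}(a,b)$---neither of which involves $\xi$ or $v$---the velocity-type functional derivatives agree,
\[
  \fd{\tilde{\ell}}{\xi} = \fd{\ell}{\xi}, \qquad \fd{\tilde{\ell}}{v} = \fd{\ell}{v}.
\]
Consequently the left-hand sides $\frac{d}{dt}\parentheses{\fd{\tilde{\ell}}{\xi}}$ and $\frac{d}{dt}\parentheses{\fd{\tilde{\ell}}{v}}$ of \eqref{eq:EP-tilde-II} coincide termwise with those of \eqref{eq:ControlledEP-1}, and likewise the drift terms $\ad_{\xi}^{*}\fd{\ell}{\xi}$, $-\mathbf{J}\parentheses{v,\fd{\ell}{v}}$, and $\rho'(\xi)^{*}\fd{\ell}{v}$ are common to both systems. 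Moreover the advected-parameter equations \eqref{eq:ControlledEP-2b} are literally identical in the two systems, so they match trivially. The matching therefore collapses to equating the remaining momentum-map and control terms in each of the two momentum equations, namely
\[
  \mathbf{K}_{\mathfrak{g}^{*}}\parentheses{ \fd{\ell}{a}, a } + u_{\mathfrak{g}^{*}} = \mathbf{K}_{\mathfrak{g}^{*}}\parentheses{ \fd{\tilde{\ell}}{a}, a } + \mathbf{M}_{\mathfrak{g}^{*}}\parentheses{ \fd{\tilde{\ell}}{b}, b }
\]
together with the analogous identity in $V^{*}$.

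The second step is purely algebraic. Since $\tilde{\ell} = \frac{1}{2}\metric{(\xi,v)}{(\xi,v)} - U(a) - \tilde{U}(a,b)$ and $\fd{\ell}{a} = -\fd{U}{a}$, I would read off $\fd{\tilde{\ell}}{a} = \fd{\ell}{a} - \fd{\tilde{U}}{a}$ and $\fd{\tilde{\ell}}{b} = -\fd{\tilde{U}}{b}$. The structural fact that makes the cancellation work is that the momentum maps $\mathbf{K}(x,a) = (\sigma'_{x})^{*}a$ and $\mathbf{M}(y,b) = (\tau'_{y})^{*}b$ are linear in their first slots, because $x \mapsto \sigma'_{x}$ and $y \mapsto \tau'_{y}$ are linear. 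This bilinearity lets me split $\mathbf{K}_{\mathfrak{g}^{*}}\parentheses{\fd{\tilde{\ell}}{a},a} = \mathbf{K}_{\mathfrak{g}^{*}}\parentheses{\fd{\ell}{a},a} - \mathbf{K}_{\mathfrak{g}^{*}}\parentheses{\fd{\tilde{U}}{a},a}$, so the common term $\mathbf{K}_{\mathfrak{g}^{*}}\parentheses{\fd{\ell}{a},a}$ cancels against its counterpart on the left. What survives, after also rewriting the $b$-term via $\fd{\tilde{\ell}}{b} = -\fd{\tilde{U}}{b}$, is exactly $u_{\mathfrak{g}^{*}} = -\mathbf{K}_{\mathfrak{g}^{*}}\parentheses{\fd{\tilde{U}}{a},a} - \mathbf{M}_{\mathfrak{g}^{*}}\parentheses{\fd{\tilde{U}}{b},b}$, and the same computation in the $V^{*}$ component gives the companion identity. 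Reassembling the $\mathfrak{g}^{*}$ and $V^{*}$ parts into a single element of $\mathfrak{g}^{*} \times V^{*}$ yields \eqref{eq:u-II}. Because every manipulation is an equivalence, the reverse implication requires no extra work.

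Honestly there is no serious obstacle here; the result is a direct matching identity. The only points that warrant care are the bookkeeping of which functional derivatives are altered by the shaping and which are inert, and the explicit appeal to the bilinearity of $\mathbf{K}$ and $\mathbf{M}$ that legitimizes cancelling the common $\mathbf{K}\parentheses{\fd{\ell}{a},a}$ term. I would also emphasize the contrast with the subrepresentation theorem: here the advected parameter $a$ and its evolution equation are retained unchanged and the parameter $b$ is carried along identically on both sides, so no auxiliary consistency condition relating $\mathbf{K}$ to a reduced momentum map is needed. The entire content of the statement is then the single pointwise prescription \eqref{eq:u-II} for the control.
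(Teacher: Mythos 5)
Your proposal is correct and follows essentially the same route as the paper's proof: note that the advection equations are Lagrangian-independent, that the velocity functional derivatives are unchanged by potential shaping, and then cancel the common $\mathbf{K}\parentheses{\fd{\ell}{a},a}$ term using linearity of $\mathbf{K}$ in its first slot to isolate \eqref{eq:u-II}. The only cosmetic difference is that you track the $\mathfrak{g}^{*}$ and $V^{*}$ components separately while the paper writes the single combined identity, and you make explicit the bilinearity that the paper uses implicitly.
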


\begin{proof}
  Equations~\eqref{eq:ControlledEP-2b} for the advected parameters $a$ and $b$ are the same in both sets of equations because they do not depend on the Lagrangian.
  Therefore it boils down to the matching between \eqref{eq:ControlledEP-1} and \eqref{eq:EP-tilde-II}.

  Clearly $\tfd{\tilde{\ell}}{\xi} = \tfd{\ell}{\xi}$ and $\tfd{\tilde{\ell}}{v} = \tfd{\ell}{v}$, and thus \eqref{eq:ControlledEP-1} and \eqref{eq:EP-tilde-II} match if and only if
  \begin{equation*}
    \mathbf{K}\parentheses{ \fd{\tilde{\ell}}{a}, a } + \mathbf{M}\parentheses{ \fd{\tilde{\ell}}{b}, b }
    = \mathbf{K}\parentheses{ \fd{\ell}{a}, a } + u,
  \end{equation*}
  or equivalently
  \begin{equation*}
    u
    = \mathbf{K}\parentheses{ \fd{\tilde{\ell}}{a} - \fd{\ell}{a}, a } + \mathbf{M}\parentheses{ \fd{\tilde{\ell}}{b}, b }
    = -\mathbf{K}\parentheses{ \fd{\tilde{U}}{a}, a } - \mathbf{M}\parentheses{ \fd{\tilde{U}}{b}, b }. \ \qed
  \end{equation*}
\end{proof}

Our goal is to find controls $u$ that stabilize those equilibria of the controlled system that would be either unstable or not even equilibria if uncontrolled.
This step imposes more concrete conditions on the potential $\tilde{U}$ so that one can determine explicit feedback control $u$.

To demonstrate the above result, let us show that it gives a unified framework for the two stabilization problems from \citet{Le1997b}:

\begin{example}[{Stabilizing underwater vehicle with desired steady motion~\cite[Section~4.1]{Le1997b}}]
  \label{ex:uwv-desired_motion}
  Continuing from Example~\ref{ex:desired_motion}, consider the problem of controlling the underwater vehicle with a desired steady motion:
  \begin{equation}
    \label{eq:ControlledEP-uwv-desired_motion}
    \begin{split}
      \dot{\bPi} &= \bPi \times \bOmega + \bP \times \mathbf{v} - m \mathrm{g} l \boldsymbol{\chi} \times \bGamma + \mathbf{u}_{\so(3)^{*}}, \\
      \dot{\bP} &= \bP \times \bOmega + \mathbf{u}_{(\R^{3})^{*}}, \\
      \dot{\bGamma} &= \bGamma \times \bOmega, \\
      \dot{\bTheta} &= \bTheta \times \bOmega,
    \end{split}
  \end{equation}
  where the equilibrium is given in terms of the desired orientation $R_{\rm d}$ and the desired velocity $\vd$ in the body frame (see Example~\ref{ex:desired_motion}) as follows:
  \begin{equation*}
    \zeta_{\rm e} \defeq (\bOmega_{\rm e}, \mathbf{v}_{\rm e}, \bGamma_{\rm e}, \bTheta_{\rm e})
    = \parentheses{
      \mathbf{0}, \vd, R_{\rm d}^{T}\mathbf{e}_{3}, \vd/\norm{ \vd }
      }.
  \end{equation*}
  It corresponds to the steady motion at the constant velocity $R_{\rm d}\mathbf{v}_{\rm d}$ in the spatial frame in the fixed attitude where the center of mass is right below the center of buoyancy.

  Note that $\mathbf{K}$ is given in \eqref{eq:K-uwv}.
  For $\mathbf{M}$, recall from Example~\ref{ex:desired_motion} that the representation $\tau$ is identical to $\sigma$ from Example~\ref{ex:uwv}.
  Therefore, $\mathbf{M}$ here is the same as $\mathbf{K}$ in \eqref{eq:K-uwv} from Example~\ref{ex:uwv}:
  \begin{equation*}
    \mathbf{M}(\mathbf{y}, \bTheta)
    = \parentheses{ \mathbf{M}_{\so(3)^{*}}(\mathbf{y}, \bTheta),\, \mathbf{M}_{(\R^{3})^{*}}(\mathbf{y}, \bTheta) }
    = (\tau_{\mathbf{y}}')^{*} \bTheta
    = (\mathbf{y} \times \bTheta, \mathbf{0}),
  \end{equation*}
  which is identical to $\mathbf{K}$; this is because $X = Y = \R^{3}$ and the representations $\sigma$ and $\tau$ are identical.
  As a result, \eqref{eq:u-II} yields
  \begin{equation}
    \label{eq:u-uwv-desired_motion}
    u(\bGamma, \bTheta)
    = \parentheses{ \mathbf{u}_{\so(3)^{*}}, \mathbf{u}_{(\R^{3})^{*}} }
    = \parentheses{
      -\pd{\tilde{U}}{\bGamma} \times \bGamma - \pd{\tilde{U}}{\bTheta} \times \bTheta,\,
      \mathbf{0}
    }.
  \end{equation}

  Now that we have the controlled system~\eqref{eq:ControlledEP-uwv-desired_motion} with control~\eqref{eq:u-uwv-desired_motion}, we would like to find a control $u$ that renders $\zeta_{\rm e}$ a stable equilibrium.
  The corresponding angular and linear impulses are $(\bPi_{\rm e}, \bP_{\rm e}) = (D \vd, M \vd)$, where $D$ and $M$ are from the kinetic energy metric~\eqref{eq:metric-uwv}.
  Note that $(\bOmega_{\rm e}, \mathbf{v}_{\rm e}, \bGamma_{\rm e})$ is \textit{not} an equilibrium of the \textit{uncontrolled} system~\eqref{eq:EP-uwv}.
  We would like to show that it is a stable equilibrium of the \textit{controlled} system so that the desired steady motion $\zeta_{\rm e}$ of the controlled system becomes stable.

  The point $\zeta_{\rm e}$ is an equilibrium of the controlled system~\eqref{eq:ControlledEP-uwv-desired_motion} with control~\eqref{eq:u-uwv-desired_motion} if and only if
  \begin{align*}
    \mathbf{u}_{\so(3)^{*}}(\bGamma_{\rm e}, \bTheta_{\rm e})
    &= m \mathrm{g} l \boldsymbol{\chi} \times \bGamma_{\rm e} - \bP_{\rm e} \times \mathbf{v}_{\rm e} \\
    &= m \mathrm{g} l \boldsymbol{\chi} \times \bGamma_{\rm e} - (M \vd) \times \vd,
  \end{align*}
  whereas the matching condition \eqref{eq:u-uwv-desired_motion} yields
  \begin{equation*}
    \mathbf{u}_{\so(3)^{*}}(\bGamma_{\rm e}, \bTheta_{\rm e}) =
    -\pd{\tilde{U}}{\bGamma}(\bGamma_{\rm e}, \bTheta_{\rm e}) \times \bGamma_{\rm e}
    - \frac{1}{\norm{\vd}} \pd{\tilde{U}}{\bTheta}(\bGamma_{\rm e}, \bTheta_{\rm e}) \times \vd.
  \end{equation*}
  Therefore, one can achieve matching by requiring $\tilde{U}$ to satisfy
  \begin{equation*}
    \pd{\tilde{U}}{\bGamma}(\bGamma_{\rm e}, \bTheta_{\rm e}) = -m \mathrm{g} l (\boldsymbol{\chi} + \beta \bGamma_{\rm e}),
    \qquad
    \pd{\tilde{U}}{\bTheta}(\bGamma_{\rm e}, \bTheta_{\rm e}) = \norm{\vd} (M - \alpha I) \vd
  \end{equation*}
  with arbitrary constants $\alpha, \beta \in \R$.
  The simplest form of $\tilde{U}$ that satisfies these conditions would be
  \begin{equation*}
    \tilde{U}(\bGamma, \bTheta)
    = -m \mathrm{g} l (\boldsymbol{\chi} + \beta \bGamma_{\rm e}) \cdot \bGamma
    + \norm{\vd} ((M - \alpha I) \vd) \cdot \bTheta.
  \end{equation*}
  As a result, we obtain the control
  \begin{equation*}
    \mathbf{u}_{\so(3)^{*}}(\bGamma, \bTheta)
    = m \mathrm{g} l (\boldsymbol{\chi} + \beta \bGamma_{\rm e}) \times \bGamma
    + \bTheta \times  (\norm{\vd} (M - \alpha I) \vd),
    \quad
    \mathbf{u}_{(\R^{3})^{*}} = \mathbf{0},
  \end{equation*}
  which are exactly Eq.~(4) of \cite{Le1997b}; note that her $r$ is our $-\boldsymbol{\chi}$.
  It is shown in \cite[Theorem~4.2]{Le1997b} using the energy--Casimir method that this control indeed stabilizes the equilibrium $\zeta_{\rm e}$ if $\alpha$ and $\beta$ satisfy $\alpha l > M$ and $l \beta > 0$.
\end{example}

\begin{example}[{Preventing drift in underwater vehicle~\cite[Section~4.2]{Le1997b}}]
  \label{ex:uwv-drift}
  Continuing from Example~\ref{ex:drift}, consider the problem of controlling the underwater vehicle with a particular interest in preventing undesired drift:
  \begin{equation}
    \label{eq:ControlledEP-uwv-drift}
    \begin{split}
      \dot{\bPi} &= \bPi \times \bOmega + \bP \times \mathbf{v} - m \mathrm{g} l \boldsymbol{\chi} \times \bGamma + \mathbf{u}_{\so(3)^{*}}, \\
      \dot{\bP} &= \bP \times \bOmega + \mathbf{u}_{(\R^{3})^{*}}, \\
      \dot{\bGamma} &= \bGamma \times \bOmega, \\
      \dot{\bDelta}_{i} &= \bDelta_{i} \times \bOmega, \\
      \dot{\delta}_{i} &= \bDelta_{i} \cdot \mathbf{v}
    \end{split}
  \end{equation}
  with $i = 1, 2$.
  As discussed in Example~\ref{ex:drift}, $\boldsymbol{\delta} = (\delta_{1}, \delta_{2})$ gives the undesired drift.
  
  Note that $\mathbf{K}$ is given in \eqref{eq:K-uwv}.
  Let us find $\mathbf{M}$.
  Using \eqref{eq:tau'-drift}, we find, for any $y = (\mathbf{y}, y_{4}), z = (\mathbf{z}, z_{4}) \in \R^{4}$ and any $\Delta_{i} = (\bDelta_{i}, \delta_{i}) \in \R^{4}$ with $i = 1, 2$,
  \begin{align*}
    \ip{ (\tau_{(y, z)}')^{*}(\Delta_{1}, \Delta_{2}) }{ (\bOmega,\mathbf{v}) }
    &= \ip{ (\Delta_{1}, \Delta_{2}) }{ \tau_{(y, z)}'(\bOmega,\mathbf{v}) } \\
    &= \bDelta_{1} \cdot (\bOmega \times \mathbf{y} + y_{4}\mathbf{v}) 
      + \bDelta_{2} \cdot (\bOmega \times \mathbf{z} + z_{4}\mathbf{v}) \\
    &= (\mathbf{y} \times \bDelta_{1} + \mathbf{z} \times \bDelta_{2}) \cdot \bOmega + (y_{4} \bDelta_{1}  + z_{4} \bDelta_{2}) \cdot \mathbf{v},
  \end{align*}
  and so
  \begin{equation*}
    (\tau_{(y,z)}')^{*}(\Delta_{1}, \Delta_{2}) = (\mathbf{y} \times \bDelta_{1} + \mathbf{z} \times \bDelta_{2},\,  y_{4} \bDelta_{1}  + z_{4} \bDelta_{2}).
  \end{equation*}
  Hence we obtain the momentum map $\mathbf{M}\colon (\R^{4} \times \R^{4}) \times (\R^{4} \times \R^{4})^{*} \to \se(3)^{*}$ as follows:
  \begin{align*}
    \mathbf{M}( (y, z), (\Delta_{1}, \Delta_{2}) )
    &= \parentheses{ \mathbf{M}_{\so(3)^{*}}( (y, z), (\bDelta_{1}, \bDelta_{2}) ),\, \mathbf{M}_{(\R^{3})^{*}}( (y, z), (\bDelta_{1}, \bDelta_{2}) ) } \\
    &= (\tau_{(y,z)}')^{*}(\bDelta_{1}, \bDelta_{2}) \\
    &= (\mathbf{y} \times \bDelta_{1} + \mathbf{z} \times \bDelta_{2},\,  y_{4} \bDelta_{1}  + z_{4} \bDelta_{2}).
  \end{align*}
  As a result, \eqref{eq:u-II} yields
  \begin{equation}
    \label{eq:u-uwv-drift}
    \begin{split}
      u(\bGamma, \Delta_{1}, \Delta_{2})
      &= \parentheses{ \mathbf{u}_{\so(3)^{*}}, \mathbf{u}_{(\R^{3})^{*}} } \\
      &= \parentheses{
        -\pd{\tilde{U}}{\bGamma} \times \bGamma - \pd{\tilde{U}}{\bDelta_{1}} \times \bDelta_{1} - \pd{\tilde{U}}{\bDelta_{2}} \times \bDelta_{2},\,
        -\pd{\tilde{U}}{\delta_{1}} \bDelta_{1} - \pd{\tilde{U}}{\delta_{2}} \bDelta_{2}
      }.
    \end{split}
  \end{equation}

  We would like to stabilize the following equilibrium of the controlled system~\eqref{eq:ControlledEP-uwv-drift} with control~\eqref{eq:u-uwv-drift}:
  \begin{equation*}
    \zeta_{\rm e} \defeq (\bOmega_{\rm e}, \mathbf{v}_{\rm e}, \bGamma_{\rm e}, \bDelta_{\rm 1,e}, \bDelta_{\rm 2,e}, \delta_{\rm 1,e}, \delta_{\rm 2,e})
    = \parentheses{
      \mathbf{0}, \vd, R_{\rm d}^{T}\mathbf{e}_{3}, R_{\rm d}^{T}\mathbf{w}_{1}, R_{\rm d}^{T}\mathbf{w}_{2}, 0, 0
    }.
  \end{equation*}
  Recall from Example~\ref{ex:desired_motion} that $R_{\rm d}$ is the desired orientation and $\vd$ is the desired velocity in the body frame, and also from Example~\ref{ex:drift} that $\{ \mathbf{w}_{1}, \mathbf{w}_{2} \}$ is a basis for the orthogonal complement to $\Span\{ \mathbf{w}_{3} \defeq R_{\rm d} \vd/\norm{ \vd } \}$ in the spatial frame.
  They are defined so that $\{ \mathbf{w}_{1}, \mathbf{w}_{2}, \mathbf{w}_{3} \}$ is a right-handed orthonormal basis, and hence so is $\{ \bDelta_{\rm 1,e}, \bDelta_{\rm 2,e}, \vd/\norm{\vd} \}$.

  The point $\zeta_{\rm e}$ is an equilibrium of the controlled system~\eqref{eq:ControlledEP-uwv-drift} with control~\eqref{eq:u-uwv-drift} if and only if
  \begin{align*}
    \mathbf{u}_{\so(3)^{*}}(\bGamma_{\rm e}, \Delta_{\rm e})
    &= m \mathrm{g} l \boldsymbol{\chi} \times \bGamma_{\rm e} - \bP_{\rm e} \times \mathbf{v}_{\rm d} \\
    &= m \mathrm{g} l \boldsymbol{\chi} \times \bGamma_{\rm e} - \norm{\vd} (M \vd) \times \parentheses{ R_{\rm d}^{T}\mathbf{w}_{3} } \\
    &= m \mathrm{g} l \boldsymbol{\chi} \times \bGamma_{\rm e}- \norm{\vd}(M \vd) \times \parentheses{
      \parentheses{ R_{\rm d}^{T}\mathbf{w}_{1} } \times \parentheses{ R_{\rm d}^{T}\mathbf{w}_{2} }
      } \\
    &= m \mathrm{g} l \boldsymbol{\chi} \times \bGamma_{\rm e} - \norm{\vd} (M \vd) \times (\bDelta_{\rm 1,e} \times \bDelta_{\rm 2,e}) \\
    &= m \mathrm{g} l \boldsymbol{\chi} \times \bGamma_{\rm e} \\
    &\quad - \norm{\vd}\parentheses{ \bDelta_{\rm 2,e} \times (M \vd) } \times \bDelta_{\rm 1,e}
      - \norm{\vd}\parentheses{ (M \vd) \times \bDelta_{\rm 1,e} } \times \bDelta_{\rm 2,e},
    \\
    \mathbf{u}_{(\R^{3})^{*}}(\bGamma_{\rm e}, \Delta_{\rm e})
    &= \mathbf{0},
  \end{align*}
  where we used the shorthand $\Delta_{\rm e} = (\bDelta_{\rm 1,e}, \delta_{\rm 1,e}, \bDelta_{\rm 2,e}, \delta_{\rm 2,e})$.
  On the other hand, the matching condition \eqref{eq:u-uwv-drift} yields
  \begin{align*}
    \mathbf{u}_{\so(3)^{*}}(\bGamma_{\rm e}, \Delta_{\rm e})
    &= -\left.\pd{\tilde{U}}{\bGamma}\right|_{\rm e} \times \bGamma_{\rm e}
      - \left.\pd{\tilde{U}}{\bDelta_{1}}\right|_{\rm e} \times \bDelta_{\rm 1,e}
      - \left.\pd{\tilde{U}}{\bDelta_{2}}\right|_{\rm e} \times \bDelta_{\rm 2,e},
    \\
    \mathbf{u}_{(\R^{3})^{*}}(\bGamma_{\rm e}, \Delta_{\rm e})
    &= -\left.\pd{\tilde{U}}{\delta_{1}}\right|_{\rm e} \bDelta_{\rm 1,e}
      - \left.\pd{\tilde{U}}{\delta_{2}}\right|_{\rm e} \bDelta_{\rm 2,e},
  \end{align*}
  where $(\,\cdot\,)|_{\rm e}$ indicates that the function is evaluated at $(\bGamma_{\rm e}, \Delta_{\rm e})$.
  Therefore, one can achieve matching by requiring $\tilde{U}$ to satisfy
  \begin{gather*}
    \left.\pd{\tilde{U}}{\bGamma}\right|_{\rm e}
    = -m \mathrm{g} l (\boldsymbol{\chi} + \beta \bGamma_{\rm e}),
    \qquad
    \left.\pd{\tilde{U}}{\delta_{1}}\right|_{\rm e} = \left.\pd{\tilde{U}}{\delta_{2}}\right|_{\rm e} = 0,
    \\
    \left.\pd{\tilde{U}}{\bDelta_{1}}\right|_{\rm e}
    = \bDelta_{\rm 2,e} \times (\norm{\vd}(M - \alpha I) \vd),
    \qquad
    \left.\pd{\tilde{U}}{\bDelta_{2}}\right|_{\rm e}
    = (\norm{\vd}(M - \alpha I)\vd) \times \bDelta_{\rm 1,e}
  \end{gather*}
  with arbitrary constants $\alpha, \beta \in \R$; note that $\bDelta_{\rm 2,e} \times \vd = \norm{\vd} \bDelta_{\rm 1,e}$ and $\vd \times \bDelta_{\rm 1,e} = \norm{\vd} \bDelta_{\rm 2,e}$ because $\{ \bDelta_{\rm 1,e}, \bDelta_{\rm 2,e}, \vd/\norm{\vd} \}$ is a right-handed orthonormal basis.
  Using the shorthand
  \begin{equation*}
    \Delta = (\Delta_{1},\Delta_{2}) = (\bDelta_{1}, \delta_{1}, \bDelta_{2}, \delta_{2}),
  \end{equation*}
  a simple form of $\tilde{U}$ satisfying these conditions would be
  \begin{equation*}
    \tilde{U}(\bGamma, \Delta)
    = -m \mathrm{g} l (\boldsymbol{\chi} + \beta \bGamma_{\rm e}) \cdot \bGamma
    + \norm{\vd} \bDelta_{1} \cdot \parentheses{ \bDelta_{2} \times ((M - \alpha I) \vd) }
    + \frac{1}{2} \boldsymbol{\delta}^{T} \mathcal{K} \boldsymbol{\delta}
  \end{equation*}
  with a positive-definite $2 \times 2$ symmetric matrix $\mathcal{K}$; note also that $\boldsymbol{\delta} = (\delta_{1}, \delta_{2}) \in \R^{2}$.
  As a result, we obtain the control
  \begin{align*}
    \mathbf{u}_{\so(3)^{*}}(\bGamma, \Delta)
    &= m \mathrm{g} l (\boldsymbol{\chi} + \beta \bGamma_{\rm e}) \times \bGamma
    + (\bDelta_{1} \times \bDelta_{2}) \times  (\norm{\vd} (M - \alpha I) \vd),
    \\
    \mathbf{u}_{(\R^{3})^{*}}(\bGamma, \Delta)
    &= - [\bDelta_{1} \ \bDelta_{2}] \mathcal{K} \boldsymbol{\delta}.
  \end{align*}

  Note that our formulation uses slightly different variables from those of \cite[Lemma~4.6 and Theorem~4.7]{Le1997b}, and gives a more succinct form of the controlled system---a simpler system with less advected parameters.
  Note also that we obtained (see Example~\ref{ex:LPB-drift} in the Appendix) a simple expression $\bP \cdot (\bDelta_{1} \times \bDelta_{2})$ for the rather awkward-looking Casimir $e_{3}^{T} Q R P$ in \cite{Le1997b}.

  Despite the relative simplicity, our control law turns out to be the same as that of \cite{Le1997b}.
  To see this, first notice that the their expression for $u^{\tau}$ ($\mathbf{u}_{\so(3)^{*}}$ in ours) has $\bTheta \defeq R^{T} Q^{T} \mathbf{e}_{3}$ in place of our $\bDelta_{1} \times \bDelta_{2}$, but then recall from Example~\ref{ex:drift} that $Q \defeq [ \mathbf{w}_{1}\ \mathbf{w}_{2}\ \mathbf{w}_{3} ]^{T}$, and so
  \begin{equation*}
    \bTheta \defeq R^{T} Q^{T} \mathbf{e}_{3}
    = R^{T}[ \mathbf{w}_{1}\ \mathbf{w}_{2}\ \mathbf{w}_{3} ] \mathbf{e}_{3}
    = R^{T}\mathbf{w}_{3} = (R^{T}\mathbf{w}_{1}) \times (R^{T}\mathbf{w}_{2})
    = \bDelta_{1} \times \bDelta_{2},
  \end{equation*}
  hence showing that our $\mathbf{u}_{\so(3)^{*}}$ is the same as their $u^{\tau}$.
  On the other hand, they have $u^{\rm f} = -R^{T} Q^{T} J K \tilde{b}$ with $J =
  \begin{tbmatrix}
    1 & 0 & 0 \\
    0 & 1 & 0 \\
    0 & 0 & 0
  \end{tbmatrix}$ and a positive-definite $3 \times 3$ matrix $K$, but then recall from Example~\ref{ex:drift} their $\tilde{b}$ is related to our $\boldsymbol{\delta}$ as $\tilde{b} = J Q \mathbf{x} = (\boldsymbol{\delta}, 0)$, and so
  \begin{equation*}
    u^{\rm f} = -R^{T} Q^{T} J K J Q \mathbf{x}
    = -\brackets{ R^{T}\mathbf{w}_{1}\ R^{T}\mathbf{w}_{2}\ R^{T}\mathbf{w}_{3} }
    \begin{bmatrix}
      \mathcal{K} & \mathbf{0} \\
      \mathbf{0}^{T} & 0
    \end{bmatrix}
    \begin{bmatrix}
      \boldsymbol{\delta} \\
      0
    \end{bmatrix}
    = -\brackets{ \bDelta_{1}\ \bDelta_{2} }
    \mathcal{K} \boldsymbol{\delta}
  \end{equation*}
  where $\mathcal{K}$ is the upper left $2 \times 2$ submatrix of $K$.
  This is nothing but our $\mathbf{u}_{(\R^{3})^{*}}$.
  Therefore, our control is the same as the one from \cite[Theorem~4.7]{Le1997b}, and hence stabilizes the equilibrium under the conditions given there.
\end{example}

\section{Conclusion}
Advected parameters help us formulate mechanical systems defined on Lie groups with broken symmetry in a simple and effective manner.
One can also keep track of additional parameters of practical interests using proper representations and advected parameters as well.
We focused on those mechanical systems on a semidirect product Lie group $\mathsf{G} \ltimes V$---with a particular focus on $\mathsf{SE}(3) = \mathsf{SO}(3) \ltimes \mathbb{R}^{3}$---with broken symmetry, and derived matching conditions using potential shaping for controlling them.

Specifically, we addressed the following two types of problems:
(i)~applying a control to reduce the advected parameters to obtain a simpler system;
(ii)~tracking and controlling additional advected parameters.
In each of these cases, we found a matching condition for potential shaping.
These matching conditions do not encompass stabilization themselves; instead they must be followed by a stability analysis to ensure stability.

The example for the first setting is a simple ad-hoc potential shaping from our previous work~\cite{CoOh-EPwithBSym1} applied to the heavy top spinning on a movable base.
Although this is a very simple control and does not stabilize the upright spinning position by itself, it is an important first step that facilitates the kinetic shaping to follow to stabilize the equilibrium as shown in \cite{CoOh-EPwithBSym1}.

On the other hand, the second setting provides more versatility.
In fact, our result gives a unified approach to two different problems on controlling underwater vehicles from \cite{Le1997b}, namely stabilization of a desired orientation (Example~\ref{ex:uwv-desired_motion}) and prevention of undesired drift (Example~\ref{ex:uwv-drift}).
Specifically, we have shown that our general matching condition reproduces those controls obtained in \cite{Le1997b} for both settings.
Furthermore, we have demonstrated the utility of our approach---which stresses the role of representations and advected parameters---by showing that it gives a simpler formulation of the problem of preventing undesired drift than that of \cite{Le1997b}.

\section*{Acknowledgments}
We would like to thank the reviewers for their helpful comments.
This work was supported by NSF grant CMMI-1824798.

\appendix

\section{Lie--Poisson Brackets}
While this paper focuses on the Lagrangian formulation of mechanical systems with broken symmetry, one can perform the Legendre transformation to obtain the Hamiltonian formulation of the systems as well.
The main advantage of the Hamiltonian formulation is that it is more useful in finding the Casimirs.

\subsection{Lie--Poisson Bracket on $\mathfrak{s}^{*} = (\mathfrak{g} \ltimes V)^{*}$}
Let $\mathfrak{s} = \mathfrak{g} \ltimes V$ be the Lie algebra of the semidirect product Lie group $\mathsf{S} \defeq \mathsf{G} \ltimes V$.
The $(-)$-Lie--Poisson bracket on $\mathfrak{s}^{*}$ is given by (see \citet{MaRaWe1984a,MaRaWe1984b})
\begin{equation}
  \label{eq:LPB-s}
  \PB{f}{h}_{\mathfrak{s}^{*}}(\mu,p)
  = -\ip{\mu}{ \left[ \fd{f}{\mu}, \fd{h}{\mu} \right] }
    -\ip{p}{ \rho'\parentheses{\fd{f}{\mu}} \pd{h}{p} - \rho'\parentheses{\fd{h}{\mu}} \pd{f}{p} }
\end{equation}
We denote $\mathfrak{s}^{*}$ equipped with $\PB{\,\cdot\,}{\,\cdot\,}_{\mathfrak{s}^{*}}$ by $\mathfrak{s}^{*}$.

\begin{example}[Lie--Poisson bracket on $\se(3)^{*}$]
  If $\mathfrak{g} = \so(3)$ and $V = \R^{3}$, then $\mathfrak{s} = \se(3)$.
  Using the expression for $\rho'$ from \eqref{eq:rho'-se3}, \eqref{eq:LPB-s} yields
  \begin{equation}
    \label{eq:LPB-se3}
    \PB{f}{h}_{\se(3)^{*}}(\bPi,\bP)
    = -\bPi \cdot \parentheses{ \pd{f}{\bPi} \times \pd{h}{\bPi} }
    - \bP \cdot \parentheses{ \pd{f}{\bPi} \times \pd{h}{\bP} - \pd{h}{\bPi} \times \pd{f}{\bP} }.
  \end{equation}
  This is essentially the heavy top bracket upon replacing $\bP$ by $\bGamma$.
  In our context, $\bP$ stands for the linear impulse defined in \eqref{eq:impulses}, and so has a different physical meaning from $\bGamma$.
\end{example}

\subsection{Lie--Poisson Bracket on $(\mathfrak{s} \ltimes X)^{*}$}
We may describe those \textit{uncontrolled} mechanical systems with broken symmetry shown in Section~\ref{ssec:EPwithAdP} as the Lie--Poisson equation on the dual $(\mathfrak{s} \ltimes X)^{*}$ of the semidirect product Lie algebra $\mathfrak{s} \ltimes X$.
Particularly, using the representation $\sigma$ defined in Section~\ref{ssec:recovery}, the Lie--Poisson bracket on $(\mathfrak{s} \ltimes X)^{*}$ is given by
\begin{equation}
  \label{eq:LPB-sX}
  \PB{f}{h}_{(\mathfrak{s} \ltimes X)^{*}}(\mu,p,a)
  = \PB{f}{h}_{\mathfrak{s}^{*}}
  -\ip{a}{ \sigma'\parentheses{\fd{f}{(\mu,p)}} \pd{h}{a} - \sigma'\parentheses{\fd{h}{(\mu,p)}} \pd{f}{a} }.
\end{equation}
Also, by considering a subrepresentation on $(\mathfrak{s} \ltimes X)^{*}$, the \textit{controlled} system~\eqref{eq:EP-tilde-I} with potential shaping using the matching described in Section~\ref{ssec:matching-1} may also be described in terms of the Lie--Poisson bracket on $(\mathfrak{s} \ltimes \tilde{X})^{*}$.

\begin{example}[Lie--Poisson bracket on $(\se(3) \ltimes \R^{3})^{*}$]
  If $\mathfrak{s} = \se(3)$ and $X = \R^{3}$, then, using the bracket \eqref{eq:LPB-se3} and also the expression for $\sigma'$ from \eqref{eq:sigma'-uwv}, \eqref{eq:LPB-sX} gives
  \begin{align*}
    \PB{f}{h}_{(\se(3) \ltimes \R^{3})^{*}}(\bPi,\bP,\bGamma)
    &= -\bPi \cdot \parentheses{ \pd{f}{\bPi} \times \pd{h}{\bPi} }
      - \bP \cdot \parentheses{ \pd{f}{\bPi} \times \pd{h}{\bP} - \pd{h}{\bPi} \times \pd{f}{\bP} } \nonumber\\
    &\quad - \bGamma \cdot \parentheses{ \pd{f}{\bPi} \times \pd{h}{\bGamma} - \pd{h}{\bPi} \times \pd{f}{\bGamma} }.
  \end{align*}
  The uncontrolled underwater vehicle from Example~\ref{ex:uwv} is governed by the Lie--Poisson equation with respect to this bracket.
  Note also that the heavy top on a movable base \textit{after} potential shaping shown in Example~\ref{ex:htmb-matching} is also described in terms of the same bracket.
\end{example}

\subsection{Lie--Poisson Bracket on $(\mathfrak{s} \ltimes (X \times Y))^{*}$}
Matching described in Section~\ref{ssec:matching-2} yields Lie--Poisson equation on the extended $(\mathfrak{s} \ltimes (X \times Y))^{*}$ with the additional parameters living in $Y^{*}$.
Using the representation $\tau$ defined in Section~\ref{ssec:matching-2}, we have the Lie--Poisson bracket on $(\mathfrak{s} \ltimes (X \times Y))^{*}$ as follows:
\begin{align}
  \label{eq:LPB-sXY}
  \PB{f}{h}_{(\mathfrak{s} \ltimes (X \times Y))^{*}}(\mu,p,a,b)
  &= \PB{f}{h}_{\mathfrak{s}^{*}}
    -\ip{a}{ \sigma'\parentheses{\fd{f}{(\mu,p)}} \pd{h}{a} - \sigma'\parentheses{\fd{h}{(\mu,p)}} \pd{f}{a} }
    \nonumber\\
  &\quad - \ip{b}{ \tau'\parentheses{\fd{f}{(\mu,p)}} \pd{h}{b} - \tau'\parentheses{\fd{h}{(\mu,p)}} \pd{f}{b} }.
\end{align}

\begin{example}[Lie--Poisson bracket on $(\se(3) \ltimes (\R^{3} \times (\R^{4} \times \R^{4})))^{*}$]
  \label{ex:LPB-drift}
  Consider the case with $\mathfrak{s} = \se(3)$, $X = \R^{3}$, and $Y = \R^{4} \times \R^{4}$.
  Using the expression for $\tau'$ from \eqref{eq:tau'-drift}, \eqref{eq:LPB-sXY} gives, using the shorthand $\Delta_{i} = (\bDelta_{i},\delta_{i}) \in \R^{4}$ with $i = 1, 2$,
  \begin{align*}
    &\PB{f}{h}_{(\se(3) \ltimes (\R^{3} \times (\R^{4} \times \R^{4})))^{*}}(\bPi, \bP, \bGamma, \Delta_{1}, \Delta_{2}) \nonumber\\
    &\qquad=-\bPi \cdot \parentheses{ \pd{f}{\bPi} \times \pd{h}{\bPi} }
      - \bP \cdot \parentheses{ \pd{f}{\bPi} \times \pd{h}{\bP} - \pd{h}{\bPi} \times \pd{f}{\bP} }
      - \bGamma \cdot \parentheses{ \pd{f}{\bPi} \times \pd{h}{\bGamma} - \pd{h}{\bPi} \times \pd{f}{\bGamma} } \nonumber\\
    &\qquad\quad - \sum_{i=1}^{2} \bDelta_{i} \cdot \parentheses{
      \pd{f}{\bPi} \times \pd{h}{\bDelta_{i}} - \pd{h}{\bPi} \times \pd{f}{\bDelta_{i}}
     - \pd{f}{\delta_{i}} \pd{h}{\bP} + \pd{h}{\delta_{i}} \pd{f}{\bP}
      } \nonumber\\
    &\qquad= \pd{f}{\bPi} \cdot \parentheses{
      \bPi \times  \pd{h}{\bPi} + \bP \times \pd{h}{\bP} + \bGamma \times \pd{h}{\bGamma}
      + \sum_{i=1}^{2} \bDelta_{i} \times \pd{h}{\bDelta_{i}}
      } \nonumber \\
    &\qquad\quad
      + \pd{f}{\bP} \cdot \parentheses{
      \bP \times \pd{h}{\bPi} - \sum_{i=1}^{2} \pd{h}{\delta_{i}} \bDelta_{i}
      }
      + \pd{f}{\bGamma} \cdot \parentheses{ \bGamma \times \pd{h}{\bPi} } \nonumber\\
    &\qquad\quad
      + \sum_{i=1}^{2} \parentheses{
      \pd{f}{\bDelta_{i}} \cdot \parentheses{ \bDelta_{i} \times \pd{h}{\bPi} }
      + \pd{f}{\delta_{i}} \parentheses{ \bDelta_{i} \cdot \pd{h}{\bP} }
      }.
  \end{align*}
  This is the Lie--Poisson bracket for the controlled system~\eqref{eq:ControlledEP-uwv-drift} from Example~\ref{ex:uwv-drift}.
  One sees from the expression that $\bP\cdot(\bDelta_{1} \times \bDelta_{2}), \norm{\bGamma}^{2}$, $\norm{\bDelta_{i}}^{2}$, $\bGamma \cdot \bDelta_{i}$, $\bDelta_{1} \cdot \bDelta_{2}$ with $i = 1,2$ are Casimirs.
\end{example}

\bibliography{CtrlEPwithBSym2}
\bibliographystyle{plainnat}

\end{document}